\newtheorem{theorem}{Theorem}[section]
\newtheorem{proposition}[theorem]{Proposition}
\newtheorem{corollary}[theorem]{Corollary}
\newtheorem{definition}[theorem]{Definition}
\newtheorem{example}[theorem]{Example}
\newtheorem{problem}{Problem}
\newcommand{\G}{\Gamma}
\newcommand{\calG}{\mathcal G}
\newcommand{\calF}{\mathcal F}
\newcommand{\calM}{\mathcal M}
\newcommand{\calP}{\mathcal P}
\newcommand{\calQ}{\mathcal Q}
\newcommand{\mix}{\diamond}
\newcommand{\shm}{\!-\!}
\newcounter{reminder}
\begin{document}

\title{Flat extensions of abstract polytopes}

\author{Gabe Cunningham\\
Department of Mathematics\\
University of Massachusetts Boston\\
Boston, Massachusetts, USA, 02125 \\
gabriel.cunningham@gmail.com
}

\date{ \today }
\maketitle

\begin{abstract}

We consider the problem of constructing an abstract $(n \!+\! 1)$-polytope $\calQ$ with $k$ facets isomorphic
to a given $n$-polytope $\calP$, where $k \geq 3$. In particular, we consider the case where we want
$\calQ$ to be $(n \!-\! 2,n)$-flat, meaning that every $(n \!-\! 2)$-face is incident to every $n$-face (facet).
We show that if $\calP$ admits such a \emph{flat extension} for a given $k$, then the facet graph of
$\calP$ is $(k \!-\! 1)$-colorable. Conversely, we show that if the facet graph is $(k \!-\! 1)$-colorable
and $k \!-\! 1$ is prime, then $\calP$ admits a flat extension for that $k$. We also show that if $\calP$ is facet-bipartite,
then for every even $k$, there is a flat extension $\calP|k$ such that every automorphism of $\calP$
extends to an automorphism of $\calP|k$. Finally, if $\calP$ is a facet-bipartite $n$-polytope and $\calQ$ is 
a vertex-bipartite $m$-polytope, we describe a \emph{flat amalgamation} of $\calP$ and $\calQ$, an
$(m \!+\! n \!-\! 1)$-polytope that is $(n \!-\! 2,n)$-flat, with $n$-faces isomorphic to $\calP$ and co-$(n \!-\! 2)$-faces
isomorphic to $\calQ$.

\vskip.1in
\medskip
\noindent
Key Words: polytope, extension, amalgamation, perfect 1-factorization

\medskip
\noindent
AMS Subject Classification (2000):  Primary: 52B05.  Secondary:  52B11, 52B15.


\end{abstract}

\section{Introduction}

	Fix an abstract $n$-polytope $\calP$ and a positive integer $k$, and suppose that you want to glue together copies
	of $\calP$ to build an $(n \!+\! 1)$-polytope $\calQ$ such that each $(n \!-\! 2)$-face of $\calQ$ is 
	surrounded by $k$ copies of $\calP$. What is the smallest possible $\calQ$? 
	
	Clearly, the best we could hope for is to use only $k$ copies of $\calP$, building $\calQ$ so that
	every $(n \!-\! 2)$-face is surrounded by every copy of $\calP$. For which polytopes $\calP$ and which integers $k$ is this possible?
	When $k = 2$, this is always possible; this is called the \emph{trivial extension} of $\calP$. More generally,
	we will show that if $k$ is even, then this is always possible if $\calP$ is \emph{facet-bipartite} (in other words,
	if we can color the facets with two colors such that adjacent facets have different colors). On the other
	hand, we will show that if $\calP$ is not $(k \!-\! 1)$-facet-colorable, then it is impossible to glue together
	$k$ copies of $\calP$ in this manner.

	The polytopes that we are working with are \emph{abstract polytopes}, which are usually defined in terms of a poset
	that is similar to the face-lattice of a polytope \cite{arp}. For the constructions discussed here, it is more natural to
	consider polytopes as a subclass of \emph{maniplexes}, which can be viewed as a kind of edge-colored graphs
	\cite{maniplexes}. The paper \cite{poly-mani} provides a characterization of which maniplexes are the flag graphs of polytopes, 
	which is a key ingredient to our approach.

	We start by giving some background on maniplexes and polytopes in Section~2. Then we consider the problem of building
	a flat extension of $\calP$ that uses $k$ copies in Section~3. \cref{facet-coloring} shows that the facet graph of $\calP$ must
	be $(k \!-\! 1)$-colorable. 
	In \cref{flat-exts1}, we will show that if $\calP$ is facet-bipartite, then any even $k \geq 2$ will work (see
	\cref{pk-polytope}) and we determine some further properties related to its automorphism group (see
	\cref{pk-auts}). Then, in \cref{other-exts}, we describe a more general construction that works for
	any $\calP$ whose facet graph is $(k \!-\! 1)$-colorable, subject to some restrictions
	on $k$ (see \cref{pk-polytope-2}). In \cref{flat-amalg}, we generalize the first construction in another way,
	building a flat amalgamation of a facet-bipartite polytope $\calP$ and a vertex-bipartite polytope $\calQ$.
	Finally, we briefly discuss some open questions that remain in \cref{open-q}.
	
\section{Maniplexes and polytopes}

	Abstract polytopes are posets that, broadly speaking, look something like the incidence relation of
	a convex polytope or a tiling of a surface or space. Their basic theory is outlined in \cite{arp}.
	Another way to view a polytope is in terms of its flag graph, and in \cite{poly-mani}, Garza-Vargas 
	and Hubard characterize which properly-edge-colored regular simple graphs are the flag
	graphs of abstract polytopes. Since the constructions in this paper operate on the flag graphs
	of polytopes, it will be natural for us to define polytopes in terms of graphs instead of posets.
	
	Let us start with a (non-standard) definition. Let $\calG$ be a graph whose nodes we
	will call \emph{flags}. Then $\calG$ is an \emph{$n$-pre-maniplex}
	if it is an $n$-regular simple graph where the edges are colored $\{0, 1, \ldots, n \!-\! 1\}$
	and each flag is incident to exactly one edge of each color. For each color $i$ and each
	flag $\Phi$, we define $\Phi^i$ to be the other endpoint of the edge of color $i$ that touches $\Phi$,
	and we say that $\Phi^i$ is \emph{$i$-adjacent} to $\Phi$. We further define $\Phi^{i,j}$ to be
	$(\Phi^i)^j$. 
	
	If $\calG$ is an $n$-pre-maniplex, then let $\calG[i_1, \ldots, i_m]$ denote the subgraph of $\calG$
	with all of the same flags as $\calG$ and with only the edges of colors $i_1, \ldots, i_m$. 
	The \emph{$(i_1, \ldots, i_m)$-color-components} of $\calG$ are the connected components of
	$\calG[i_1, \ldots, i_m]$.
	
	In an $n$-pre-maniplex $\calG$, we say that colors $i$ and $j$ \emph{commute} if, for each flag $\Phi$, $\Phi^{i,j} = \Phi^{j,i}$.
	Equivalently, $i$ and $j$ commute if $\calG[i,j]$ is a union of $4$-cycles. Note that if $A$ and $B$ are sets of colors such that
	every color in $A$ commutes with every color in $B$, then whenever there is a path from $\Phi$ to $\Psi$ using edges of colors
	in $A \cup B$, there must be a flag $\Lambda$ such that there is a path from $\Phi$ to $\Lambda$ using color set $A$ and then
	a path from $\Lambda$ to $\Psi$ using color set $B$.
	
	We define an \emph{$n$-maniplex} to be an $n$-pre-maniplex such that, for every pair of colors
	$i$ and $j$ such that $|i-j| > 1$, those colors commute. For each $i \in \{0, \ldots, n \!-\! 1\}$, 
	the \emph{$i$-faces} of an $n$-maniplex are the connected components of $\calG[0, \ldots, i-1, i+1, \ldots, n \!-\! 1]$.
	We say that two faces are \emph{incident} if they have nonempty intersection. The $(n \!-\! 1)$-faces of an $n$-maniplex
	are called its \emph{facets}. 
	
	Finally, an $n$-maniplex is an \emph{$n$-polytope} if it satisfies the following \emph{Path Intersection Property}:
	for every pair of flags $\Phi$ and $\Psi$ and every $i < j$, if there is a path between $\Phi$ and $\Psi$ 
	that uses colors $i, \ldots, n \!-\! 1$ and another path between them that uses colors $0, \ldots, j$,
	then there must be a path between them that uses only the colors $i, \ldots, j$ (see \cite[Thm. 5.3]{poly-mani}).
	
	In the context of graphs, an \emph{automorphism} of an $n$-polytope is a graph automorphism that
	preserves the edge colors, and we denote the automorphism group of $\calP$ by $\G(\calP)$.
	In other words, $\varphi$ is an automorphism of $\calP$ if it is a bijection on the flags such that,
	for every flag $\Phi$ and every edge color $i$, we have $\Phi^i \varphi = (\Phi \varphi)^i$.
	If $\calP$ and $\calQ$ are $n$-polytopes, then $\calP$
	\emph{covers} $\calQ$ if there is a surjective graph homomorphism from $\calP$ to $\calQ$ that
	preserves the edge colors. A polytope is \emph{regular} if the automorphism group acts transitively
	on the flags. The \emph{symmetry type graph} of a polytope $\calP$ is the quotient
	of $\calP$ by the orbits of the nodes under $\G(\calP)$; see \cite{stg}.
	
	The \emph{facet graph} of a polytope $\calP$ is a simple graph whose nodes correspond to the facets of
	$\calP$, and where two nodes are connected if the corresponding facets are connected by an edge
	labeled $n \!-\! 1$ in $\calP$. 
	A polytope is \emph{facet-bipartite} if its facet graph is bipartite. Equivalently, a polytope is facet-bipartite
	if and only if there are no cycles in $\calP$ with an odd number of edges labeled $n \!-\! 1$.

	The \emph{dual} of a polytope $\calP$ is the polytope $\calP^{*}$ obtained by changing every edge
	label from $i$ to $n \!-\! 1-i$. The \emph{$1$-skeleton} of $\calP$ is the facet graph of $\calP^{*}$. That is,
	the nodes of the $1$-skeleton correspond to the $0$-faces of $\calP$, and two nodes are connected
	if there is an edge labeled $1$ between the corresponding faces in $\calP$. The polytope $\calP$ is
	\emph{vertex-bipartite} if there are no cycles in $\calP$ with an odd number of edges labeled $0$.

	A polytope $\calP$ is \emph{$(i,j)$-flat} if every $i$-face is incident to every $j$-face. In other words, 
	$\calP$ is $(i,j)$-flat if, for every flag $\Phi$ and every $j$-face, there is a path from $\Phi$ to some flag in
	that $j$-face that does not use any edges of color $i$.
	
	\begin{proposition}
	\label{flat-crit}
	Suppose $i < j$. Then the $n$-polytope $\calP$ is $(i,j)$-flat if and only if, for every flag $\Phi$ and every $j$-face,
	there is a path from $\Phi$ to some flag in that $j$-face that only uses edges of colors $\{i+1, \ldots, n \!-\! 1\}$.
	\end{proposition}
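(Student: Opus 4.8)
The plan is to prove the two implications separately, with the forward direction being essentially free and the reverse direction being the only one that requires work.

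First, the ``if'' direction is immediate: a path that uses only colors in $\{i+1, \ldots, n \!-\! 1\}$ certainly uses no edge of color $i$, so it already witnesses $(i,j)$-flatness via the equivalent formulation of the definition recorded just before the proposition. No further argument is needed here.

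For the ``only if'' direction, suppose $\calP$ is $(i,j)$-flat, fix a flag $\Phi$ and a $j$-face $F$, and set $A = \{0, \ldots, i-1\}$ and $B = \{i+1, \ldots, n \!-\! 1\}$. By flatness there is a path from $\Phi$ to some flag $\Psi$ of $F$ that avoids color $i$, i.e.\ that uses only colors in $A \cup B$. The key point is that every color of $A$ commutes with every color of $B$: for $a \le i-1$ and $b \ge i+1$ we have $b - a \ge 2 > 1$, so since $\calP$ is a maniplex these colors commute. I would then invoke the commuting observation from Section~2 — applied with the roles of the two color sets interchanged, which is legitimate since commuting is symmetric — to produce a flag $\Lambda$ together with a path from $\Phi$ to $\Lambda$ using only colors in $B$ and a subsequent path from $\Lambda$ to $\Psi$ using only colors in $A$.

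It then remains to check that $\Lambda \in F$. Every color in $A$ is at most $i-1$, hence strictly less than $j$; in particular the path from $\Lambda$ to $\Psi$ never uses color $j$. Since a $j$-face is by definition a connected component of $\calG[0, \ldots, j-1, j+1, \ldots, n \!-\! 1]$, the flags $\Lambda$ and $\Psi$ lie in the same $j$-face, namely $F$. Thus the path from $\Phi$ to $\Lambda$ is a path from $\Phi$ to a flag of $F$ using only colors in $\{i+1, \ldots, n \!-\! 1\}$, as required. (When $i = 0$ the set $A$ is empty and the argument degenerates harmlessly, since the original flat path already uses only colors in $B$.) The one genuinely load-bearing idea is the decision to split the path so that all the high-color ($B$) moves come first and the low-color ($A$) moves come last: because the trailing segment uses only colors below $j$, it cannot leave the target $j$-face, which is exactly what lets us discard it. I note that this argument needs only the maniplex commuting structure and not the full Path Intersection Property.
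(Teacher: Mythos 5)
Your proof is correct and takes essentially the same route as the paper's: both apply the commuting-color-sets observation from Section~2 to reorder the flat path so that the colors $\{i+1, \ldots, n-1\}$ come first, and then note that the trailing segment uses only colors less than $j$ and therefore cannot leave the $j$-face. Your additional remarks (the symmetry of the commuting observation, the degenerate case $i=0$, and the fact that only the maniplex commuting structure is needed rather than the Path Intersection Property) are accurate refinements of that same argument.
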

	
	\begin{proof}
	Suppose that $\calP$ is $(i,j)$-flat and consider an arbitrary flag $\Phi$ and a $j$-face. Suppose that
	$\Psi$ is a flag in the $j$-face such that there is a path from $\Phi$ to $\Psi$ that never uses color $i$. 
	So the path from $\Phi$ to $\Psi$ uses colors $\{0, \ldots, i-1\}$ and $\{i+1, \ldots, n \!-\! 1\}$. Since these
	two color sets commute, there must be a flag $\Lambda$ such that there is a path from $\Phi$ to $\Lambda$
	using colors $\{i+1, \ldots, n \!-\! 1\}$ and then a path from $\Lambda$ to $\Psi$ using colors $\{0, \ldots, i-1\}$.
	Since $i < j$, the latter color set does not include $j$, and so $\Lambda$ is in the same $j$-face as $\Psi$.
	Then there is a path from $\Phi$ to the $j$-face that only uses edges of colors $\{i+1, \ldots, n \!-\! 1\}$.
	That proves one direction, and the other direction is clear.
	\end{proof}
	
\section{Flat extensions}

	Our goal is to take $k$ copies of an $n$-polytope $\calP$ and glue them together into an $(n \!+\! 1)$-polytope
	$\calQ$. Furthermore, we would like for every $(n \!-\! 2)$-face of $\calQ$ to be surrounded by all $k$ copies
	of $\calP$ --- in other words, we would like $\calQ$ to be $(n \!-\! 2, n)$-flat. How do we get started?

	If such a polytope $\calQ$ exists, then removing all edges labeled $n$ yields $k$ copies of $\calP$.
	So in order to build $\calQ$, let us take $k$ copies of $\calP$ (which we will call the \emph{layers}
	of $\calQ$), labeled $\calP_1, \ldots, \calP_k$.
	For each flag $\Phi$ of $\calP$, we will write $\Phi_i$ for the image of $\Phi$ in $\calP_i$.
	Now, we create $\calQ$ from these $k$ copies of $\calP$ by adding a perfect matching using new
	edges labeled $n$. How do we do so in a way that ensures that $\calQ$ is a polytope?
	
	First we need to make sure that color $n$ commutes with each color $c$ in $\{0, \ldots, n \!-\! 2\}$.
	To do so, once we decide to match some flag
	$\Phi_i$ to $\Psi_j$, we must also match $(\Phi_i)^c$ to $(\Psi_j)^c$ for every $c \in \{0, \ldots, n \!-\! 2\}$. 
	Applying this restriction recursively shows that the matching of flags must induce a matching of 
	the $\{0,\ldots,n \!-\! 2\}$-color components, which correspond to the facets of $\calP$. (See \cref{induced-matching}.)
	
	\begin{figure}[htbp]
	\begin{center}
	\includegraphics[height=6cm]{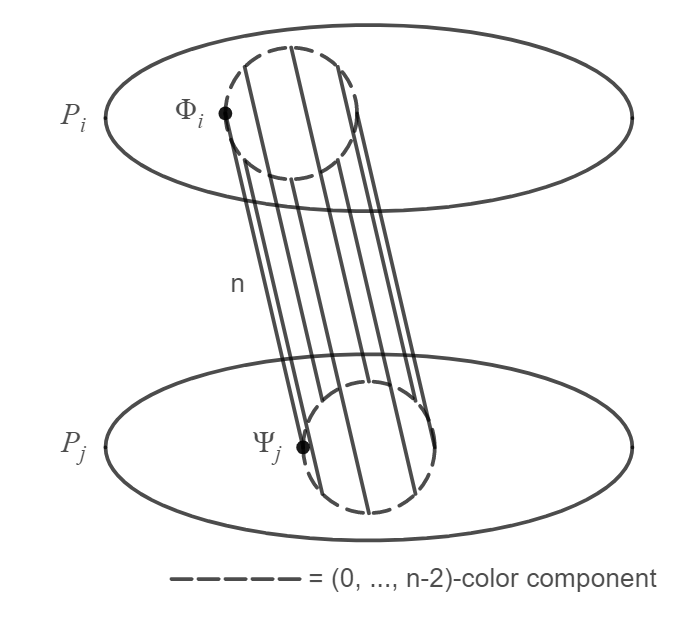}
	\caption{Matching $\Phi_i$ to $\Psi_j$ induces a matching of the $(0, \ldots, n \!-\! 2)$-color components.}
	\label{induced-matching}
	\end{center}
	\end{figure}

	Next, we want $\calQ$ to be $(n \!-\! 2, n)$-flat. By \cref{flat-crit}, this is equivalent to making every
	$(n \!-\! 1,n)$-color-component intersect every $\calP_j$. 

	We have already observed that once we match a flag $\Phi$, that induces a matching of $\Phi^c$ for each
	$c \in \{ 0, \ldots, n \!-\! 2 \}$. Now we will see that requiring that $\calQ$ be flat restricts
	our choice of how we match $\Phi^{n \!-\! 1}$.

	\begin{proposition} \label{different-layers}
	Suppose $\calQ$ is an $(n \!+\! 1)$-polytope that is $(n \!-\! 2,n)$-flat, with $k$ facets isomorphic to $\calP$, where
	$k \geq 3$. Then for every $\Phi_i$, the flags $(\Phi_i)^{n}$ and $(\Phi_i)^{n \!-\! 1,n}$ are in different layers $\calP_j$.
	\end{proposition}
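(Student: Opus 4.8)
The plan is to read off the two flags from the structure of the single $(n-1,n)$-color-component $C$ containing $\Phi_i$. Since every flag meets exactly one edge of color $n-1$ and one of color $n$, the component $C$ is one cycle whose edges alternate in color between $n-1$ and $n$ (the flag graph of a polygon). First I would cut $C$ along its color-$(n-1)$ edges into \emph{pairs} of flags. Each pair lies inside a single layer, because the layers are the $n$-faces, i.e.\ the color-components of $\{0,\ldots,n-1\}$, so a color-$(n-1)$ edge never leaves its layer. The color-$n$ edges then join consecutive pairs, and each such edge joins two \emph{different} layers, since an edge of color $c$ always joins two different $c$-faces. The flags of interest sit in the two pairs neighbouring the pair $\{\Phi_i,(\Phi_i)^{n-1}\}$: namely $(\Phi_i)^n$ lies in the next pair and $(\Phi_i)^{n-1,n}$ in the previous one.

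Next I would bring in flatness. By \cref{flat-crit}, the hypothesis that $\calQ$ is $(n-2,n)$-flat says precisely that $C$ meets every layer $\calP_1,\ldots,\calP_k$. The overall strategy is then a contradiction: suppose $(\Phi_i)^n$ and $(\Phi_i)^{n-1,n}$ share a layer. I claim this forces $C$ to collapse onto just two layers. Indeed, these two flags would then lie in a common facet and so be joined by a path in colors $\{0,\ldots,n-1\}$; they are also joined by the path $(\Phi_i)^n\to\Phi_i\to(\Phi_i)^{n-1}\to(\Phi_i)^{n-1,n}$ in colors $\{n-1,n\}$. Granting the key implication discussed below, this yields $(\Phi_i)^{n-1,n}=(\Phi_i)^{n,n-1}$ (the alternative $(\Phi_i)^{n-1,n}=(\Phi_i)^n$ is impossible, as it would give $\Phi_i=(\Phi_i)^{n-1}$). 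Then $\{\Phi_i,(\Phi_i)^{n-1},(\Phi_i)^{n-1,n},(\Phi_i)^n\}$ is a single $4$-cycle in colors $\{n-1,n\}$ that is all of $C$, so $C$ meets only two layers --- contradicting that it meets all $k\geq 3$ of them.

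The hard part, and where polytopality is genuinely used, is the key implication: if two flags share a facet \emph{and} are joined by a path in colors $\{n-1,n\}$, then they are already joined using color $n-1$ alone (so they are equal or $n-1$-adjacent). For a general maniplex this can fail, and a direct appeal to the Path Intersection Property is just out of reach, since it would need the overlapping ranges $\{n-1,n\}$ and $\{0,\ldots,n-1\}$ to satisfy the hypothesis $i<j$ with $i=j=n-1$. The honest route is through the $2$-section above the $(n-2)$-face $G$ determined by $\Phi_i$: the facets and $(n-1)$-faces of $\calQ$ incident to $G$ are the faces of this section, and the polytope structure forces it to be a genuine polygon, so that distinct pairs of $C$ correspond to distinct incident facets, hence to distinct layers. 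I would therefore organise the write-up around establishing this polygonal-section fact (equivalently, the $n-1$-path implication above); once it is in hand, the bookkeeping with pairs and neighbours in the preceding two paragraphs is routine.
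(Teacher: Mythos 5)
Your proof is correct, and its skeleton is the same as the paper's: assume the two flags share a layer, note that they are then joined both by a path in colors $\{0,\ldots,n-1\}$ (inside the common layer) and by the length-three path in colors $\{n-1,n\}$ through $\Phi_i$, deduce $(\Phi_i)^{n-1,n}=(\Phi_i)^{n,n-1}$, conclude that the $(n-1,n)$-color-component is a single $4$-cycle meeting at most two layers, and contradict $(n-2,n)$-flatness (via \cref{flat-crit}) since $k\geq 3$. Where you genuinely diverge is the justification of the middle step. The paper simply invokes the Path Intersection Property, and your observation about this is sharp: as stated in this paper, the PIP requires $i<j$, while this application needs the boundary case $i=j=n-1$ (the color sets $\{n-1,\ldots,n\}$ and $\{0,\ldots,n-1\}$ intersect in the singleton $\{n-1\}$), and that case cannot be reached by simply reapplying the $i<j$ instances (the closest one gets is a path in $\{n-2,n-1\}$). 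The paper is implicitly using the fact that flag graphs of polytopes satisfy the intersection property for $i\leq j$ as well --- for $i=j$ this is precisely the diamond condition --- and your polygonal $2$-section over the $(n-2)$-face of $\Phi_i$ is a correct way to supply exactly that: the section is a polygon whose edges are the facets containing that $(n-2)$-face, and identifying the two pairs neighbouring that of $\Phi_i$ forces a digon, i.e., a component meeting only two layers. So your route patches a real (if minor) imprecision in the paper's formalism, at the cost of importing the poset/section viewpoint that the paper's graph-theoretic setup never develops. One small caveat on your bookkeeping: your assertion that an edge of color $n$ always joins two different $n$-faces is itself a polytopality fact (it is the empty-intersection case $i>j$ of the full intersection property, and it can fail in a mere maniplex); it is harmless here since $\calQ$ is assumed to be a polytope, and in fact your final contradiction never needs it, since a $4$-cycle meets at most two layers regardless.
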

	
	\begin{proof}
	Suppose $(\Phi_i)^n$ and $(\Phi_i)^{n \!-\! 1,n}$ are in the same layer. Then there is a path from $(\Phi_i)^n$ to $(\Phi_i)^{n \!-\! 1, n}$ 
	using edges labeled $\{0, \ldots, n \!-\! 1\}$. There is also a path from $(\Phi_i)^n$ to $(\Phi_i)^{n \!-\! 1,n}$ using edges labeled
	only $n \!-\! 1$ and $n$. Then the Path Intersection Property implies that there is a path using only edges labeled
	$n \!-\! 1$, which means that $(\Phi_i)^{n,n \!-\! 1} = (\Phi_i)^{n \!-\! 1,n}$. Thus the $(n \!-\! 1,n)$-color component that contains
	$\Phi_i$ consists of only four flags in two layers, and since $k \geq 3$ this implies that $\calQ$ is not $(n \!-\! 2,n)$-flat.
	\end{proof}

	Let us reinterpret this result in terms of the facet graph of $\calP$. For each facet of $\calP$ (corresponding to
	a $(0, \ldots, n \!-\! 2)$-color component of $\calQ$), consider the
	flags in the last layer $\calP_k$ that contain that facet. By the discussion earlier, all of these flags are matched to flags
	in some single layer $\calP_i$ with $i \in \{1, \ldots, k \!-\! 1\}$. Then we may color each facet of $\calP$
	by that number $i$, and \cref{different-layers} implies that this is a \emph{proper} coloring!
	Therefore,
	
	\begin{corollary} \label{facet-coloring}
	Let $k \geq 3$. If $\calP$ is an $n$-polytope such that its facet graph is not $(k \!-\! 1)$-colorable, then there are no $(n \!+\! 1)$-polytopes $\calQ$ with $k$ facets
	isomorphic to $\calP$ such that $\calQ$ is $(n \!-\! 2,n)$-flat.
	\end{corollary}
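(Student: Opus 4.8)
The plan is to prove the contrapositive: assuming there exists an $(n+1)$-polytope $\calQ$ with $k$ facets isomorphic to $\calP$ that is $(n-2,n)$-flat, I would produce a proper $(k-1)$-coloring of the facet graph of $\calP$. Deleting the color-$n$ edges of $\calQ$ decomposes it into $k$ layers $\calP_1, \ldots, \calP_k$, each isomorphic to $\calP$ and using only the colors $0, \ldots, n-1$, so every color-$n$ edge must join two distinct layers. I would then single out the last layer $\calP_k$ and read the required coloring off of the color-$n$ matching.

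First I would establish that the color-$n$ matching carries each facet of $\calP_k$ entirely into one other layer, so that the color is well-defined. Since $\calQ$ is an $(n+1)$-maniplex, color $n$ commutes with every color $c \in \{0, \ldots, n-2\}$; hence for any flag $\Psi$ in a given facet (a $\{0,\ldots,n-2\}$-color-component) one has $(\Psi^c)^n = (\Psi^n)^c$, so $c$-adjacent flags are matched to $c$-adjacent flags. Propagating this along a $\{0,\ldots,n-2\}$-path within the facet shows that the whole facet is matched into a single $\{0,\ldots,n-2\}$-color-component of $\calQ$, i.e.\ into a single facet lying in a single layer $\calP_i$. Because color-$n$ edges never stay within a layer, $i \in \{1, \ldots, k-1\}$. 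I would then color each facet of $\calP_k \cong \calP$ by this index $i$, giving a map from the facets of $\calP$ into the $(k-1)$-element set $\{1, \ldots, k-1\}$.

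It remains to check that this coloring is proper, and here \cref{different-layers} does the work. Two facets of $\calP_k$ are adjacent in the facet graph exactly when some flag $\Phi_k$ in one of them has $(\Phi_k)^{n-1}$ in the other. The color of the first facet is the layer containing $(\Phi_k)^n$, while the color of the second is the layer containing $((\Phi_k)^{n-1})^n = (\Phi_k)^{n-1,n}$. By \cref{different-layers}, the flags $(\Phi_k)^n$ and $(\Phi_k)^{n-1,n}$ lie in different layers, so the two adjacent facets receive different colors, and the coloring is proper.

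I expect the only genuinely delicate step to be the well-definedness argument in the second paragraph — confirming that the entire facet, not merely a single flag, is matched into one layer — since that is precisely where the commutation of color $n$ with $\{0,\ldots,n-2\}$ is used; properness then follows immediately from \cref{different-layers}. The observation that the matched layer is never $\calP_k$ itself (so that only $k-1$ colors are needed) relies on the fact that the layers, being copies of the $n$-polytope $\calP$, contain no internal color-$n$ edges.
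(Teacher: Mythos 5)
Your proposal is correct and takes essentially the same route as the paper: the paper's proof is exactly the discussion preceding the corollary, which uses the commutation of color $n$ with colors $\{0, \ldots, n-2\}$ to show the color-$n$ matching induces a well-defined matching of facets, colors each facet of $\calP$ by the index of the layer into which the flags of $\calP_k$ in that facet are matched, and invokes \cref{different-layers} for properness of the coloring. One small remark: your closing justification that the target layer is never $\calP_k$ itself (``layers contain no internal color-$n$ edges'') is slightly imprecise, since a color-$n$ edge of $\calQ$ could a priori have both endpoints in one component of $\calQ[0,\ldots,n-1]$; the correct reason, which the paper likewise leaves implicit, is that in a polytope $n$-adjacent flags lie in different facets, i.e.\ in different connected components of $\calQ[0,\ldots,n-1]$.
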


	\begin{example}
	Since the facet graph of the $n$-simplex is the complete graph $K_{n \!+\! 1}$, there are no $(n \!-\! 2,n)$-flat $(n \! + \! 1)$-polytopes $\calQ$ with
	$n \!+\! 1$ simplicial facets.
	\end{example}
	
	\subsection{Flat extensions of facet-bipartite polytopes}
	\label{flat-exts1}

		When trying to define a matching in order to build $\calQ$, the most straightforward way would be for each
		$\Phi_i$ to be matched to some $\Phi_j$. That is, each flag is matched to the `same' flag in a different
		layer. The easiest such matching would have each flag $\Phi_i$ matched to either $\Phi_{i-1}$ or $\Phi_{i+1}$.
		Then the argument for \cref{facet-coloring} works in essentially the same way to show that, since
		each layer is matched to only two other layers, $\calP$ must be facet-bipartite in order for this to work.
		We will show that this necessary condition is also sufficient.
		
		So, suppose that $\calP$ is a facet-bipartite $n$-polytope, and let $k$ be an even positive integer. 
		Given a proper coloring of the facet graph of $\calP$ with two colors (say red and blue), we can
		color each flag of $\calP$ according to the color of its facet. Then, for each red flag $\Phi$, we
		will match $\Phi_1$ to $\Phi_2$, $\Phi_3$ to $\Phi_4$, and so on. For each blue flag $\Psi$,
		we will match $\Psi_2$ to $\Psi_3$, $\Psi_4$ to $\Psi_5$, and so on (matching $\Psi_k$ to $\Psi_1$).
		We refer to the graph that we obtain by $\calP|k$. (See \cref{flat-ext-figure}.)
		
		\begin{figure}[htbp]
		\begin{center}
		\includegraphics[height=5cm]{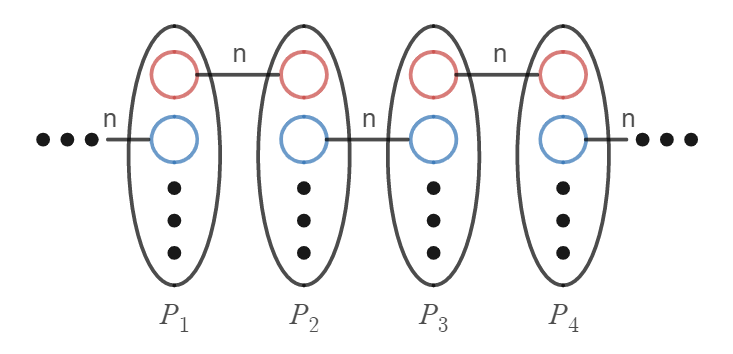}
		\caption{Flags are matched according to the coloring of the facet graph of $\calP$.}
		\label{flat-ext-figure}
		\end{center}
		\end{figure}

		First, let us show that this construction really yields a polytope with the desired properties.
		
		\begin{theorem}
		\label{pk-polytope}
		The graph $\calP|k$ is the flag graph of an $(n \!-\! 2,n)$-flat $(n \!+\! 1)$-polytope with $k$ facets
		isomorphic to $\calP$.
		\end{theorem}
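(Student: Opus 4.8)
The plan is to verify, in turn, that $\calP|k$ is an $(n \shm 1 \shp 2)$-pre-maniplex, that it is in fact an $(n \shp 1)$-maniplex, that it satisfies the Path Intersection Property, and finally that it is $(n \shm 2,n)$-flat with $k$ facets isomorphic to $\calP$. The pre-maniplex structure is immediate: each layer $\calP_i$ already supplies one edge of every color in $\{0,\ldots,n \shm 1\}$ at each flag, and the matching rule supplies exactly one edge of color $n$. This is where evenness of $k$ enters: for a red flag the layers are paired as $\{1,2\},\{3,4\},\ldots,\{k \shm 1,k\}$ and for a blue flag as $\{2,3\},\ldots,\{k,1\}$, and both are genuine perfect matchings of $\{1,\ldots,k\}$ precisely because $k$ is even. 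The graph is simple since color-$n$ edges run between distinct layers while all other edges stay within a layer.

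To see that $\calP|k$ is a maniplex, the only new commutations to check are those between color $n$ and a color $c \in \{0,\ldots,n \shm 2\}$, since commutations among $\{0,\ldots,n \shm 1\}$ are inherited from $\calP$. The key observation is that $\Phi$ and $\Phi^c$ lie in the same facet of $\calP$ whenever $c \leq n \shm 2$, hence receive the same color in the two-coloring, hence are matched by the same layer-rule. Writing $(\Phi_i)^n = \Phi_{i'}$ with $i'$ depending only on the color of $\Phi$ and on $i$, I would conclude $(\Phi_i)^{n,c} = (\Phi^c)_{i'} = (\Phi_i)^{c,n}$, giving the required commutation.

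The heart of the argument is the Path Intersection Property, which I expect to be the main obstacle. First I would reduce to the case $1 \le i < j \le n \shm 1$, since the hypothesis and conclusion coincide when $i = 0$ or $j = n$. For such $i,j$, a path using colors $\{0,\ldots,j\}$ never uses color $n$, so it stays inside a single layer $\calP_m$; thus both flags, say $\Phi_m$ and $\Psi_m$, lie in $\calP_m$. The crucial device is the layer-forgetting map $\pi\colon \calP|k \to \calP$, $\Phi_i \mapsto \Phi$, which sends each edge of color $c \le n \shm 1$ to an edge of the same color and each edge of color $n$ to a trivial step (a loop). Projecting the given $\{i,\ldots,n\}$-path yields a walk in $\calP$ from $\Phi$ to $\Psi$ using only colors $\{i,\ldots,n \shm 1\}$; lifting that walk back into $\calP_m$ produces an $\{i,\ldots,n \shm 1\}$-path there. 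Now $\calP_m$ is a copy of the polytope $\calP$ and contains both this $\{i,\ldots,n \shm 1\}$-path and the original $\{0,\ldots,j\}$-path between $\Phi_m$ and $\Psi_m$, so the Path Intersection Property of $\calP$ supplies the desired $\{i,\ldots,j\}$-path, which lies inside $\calP_m$ as required.

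It remains to establish flatness and identify the facets. The facets of $\calP|k$ are the $\{0,\ldots,n \shm 1\}$-components, which are exactly the $k$ layers, each isomorphic to $\calP$. For $(n \shm 2,n)$-flatness I would invoke \cref{flat-crit}: it suffices to show that every $(n \shm 1,n)$-color-component meets every layer. Tracing such a component, the base flag ranges only over the $(n \shm 1)$-orbit $\{\Phi, \Phi^{n-1}\}$, since color $n \shm 1$ is an involution and color $n$ leaves the base flag fixed. Because $\calP$ is facet-bipartite these two flags have opposite colors, so their matchings interleave as $\{1,2\},\{3,4\},\ldots$ and $\{2,3\},\{4,5\},\ldots$; following the alternation of $n$- and $(n \shm 1)$-edges one sweeps through the layers $1,2,3,\ldots$ and, using $k$ even, closes up into a single $2k$-cycle that visits every layer. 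This gives flatness (and incidentally shows that $\calP|k$ is connected) and completes the proof. The only genuinely delicate point is the Path Intersection Property, and the projection $\pi$ is what makes it manageable; the remaining verifications are bookkeeping.
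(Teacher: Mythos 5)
Your proposal is correct and takes essentially the same route as the paper's proof: the ``layer-forgetting'' projection (exploiting that color-$n$ edges join flags with the same underlying flag of $\calP$) to pull the Path Intersection Property back to a single layer, and the explicit alternating $2k$-cycle through $\Phi_1, \Phi_2, \Psi_2, \Psi_3, \ldots$ for $(n\shm 2,n)$-flatness. Your extra bookkeeping (the commutation check for color $n$ with $c \leq n\shm 2$ and the reduction of the trivial cases $i=0$, $j=n$) merely spells out steps the paper leaves implicit.
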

		
		\begin{proof}
		By construction, it is clear that $\calP|k$ has $k$ facets isomorphic to $\calP$. If $\Phi$ is a red flag
		and $\Psi = \Phi^{n \!-\! 1}$, then $\Psi$ is blue and the $(n \!-\! 1,n)$-color component that contains $\Phi_1$ is the cycle
		\[ (\Phi_1, \Phi_2, \Psi_2, \Psi_3, \Phi_3, \Phi_4, \ldots, \Psi_k, \Psi_1), \]
		which intersects every layer. It is clear then that each 
		$(n \!-\! 1,n)$-color component intersects every $\calP_i$, and so $\calP|k$ is $(n \!-\! 2,n)$-flat.
		It is also clear that $\calP|k$ is a maniplex, since we forced the new edges labeled $n$ to
		commute with the edges labeled $0, 1, \ldots, n \!-\! 2$. 
		
		It remains to show that $\calP|k$ is a polytope by showing that it satisfies the Path Intersection Property.
		Consider colors $i$ and $j$ satisfying $0 \leq i < j \leq n$. Suppose there are two flags such that there
		is a path between them using colors $0, \ldots, j-1$ and $i+1, \ldots, n$. Since $j-1 < n$, it follows
		that the two flags are in the same layer, and without loss of generality we will assume they are in layer 1.
		So there are two flags $\Phi_1$ and $\Psi_1$ such that there is a path between them that uses
		colors $i+1, \ldots, n$. Since edges of color $n$ always connect two flags with the same underlying
		flag in $\calP$, such a path induces a path between $\Phi$ and $\Psi$ in $\calP$ that only
		uses colors $i+1, \ldots, n \!-\! 1$. Similarly, there is an induced path between $\Phi$ and $\Psi$ in
		$\calP$ that uses colors $0, \ldots, j-1$. Then, since $\calP$ is a polytope, it follows that
		there is a path from $\Phi$ to $\Psi$ that uses colors $i+1, \ldots, j-1$, and then this
		path also lifts to an isomorphic path from $\Phi_1$ to $\Psi_1$ using only those colors,
		as desired.
		\end{proof}

		\begin{example}
		If $\calP$ is the unique $1$-polytope, then $\calP|k$ is a $k$-gon.
		\end{example}

		\begin{example}
		If $\calP$ is a square, then $\calP|4$ is the map $\{4, 4\}_{(2,0)}$ on the torus (see \cite[Sec. 1D]{arp}).
		\end{example}
		
		\begin{example}
		If $k = 2$, then we don't even need for $\calP$ to be facet-bipartite --- we can just match each
		$\Phi_1$ to $\Phi_2$. Indeed, $\calP|2$ is the \emph{trivial extension} of $\calP$, also denoted
		$\{\calP, 2\}$.
		\end{example}
		
		\begin{example}
		Nothing goes wrong if we try $k = \infty$ and index the layers $\calP_i$ by letting $i$ be any integer.
		We still get an $(n \!-\! 2,n)$-flat polytope with infinitely many facets isomorphic to $\calP$.
		\end{example}

		Now let us determine the automorphism group of $\calP|k$. Fix a base flag $\Phi$ of $\calP$,
		and consider an automorphism $\varphi$ of $\calP$ that sends $\Phi$ to $\Psi$. Can we extend
		$\varphi$ to an automorphism $\tilde{\varphi}$ of $\calP|k$?
		
		Without loss of generality, let us assume that $\Phi$ is red. Then the other red flags are those that
		can be reached from $\Phi$ using an even number of edges labeled $n \!-\! 1$, and the blue flags
		are those that can be reached from $\Phi$ using an odd number of edges labeled $n \!-\! 1$.
		Furthermore, $\varphi$ respects these color classes since, for each flag $\Lambda$,
		we have $\Lambda^{n \!-\! 1} \varphi = (\Lambda \varphi)^{n \!-\! 1}$. 

		Now, if $\Psi$ is also red, then $\varphi$ preserves the color of every flag. 
		Then we define $\tilde{\varphi}$ so that, for each flag $\Lambda$ of $\calP$,
		\[ (\Lambda_i) \tilde{\varphi} = (\Lambda \varphi)_i. \]
		In other words, $\tilde{\varphi}$ fixes each layer setwise, and acts on each layer in the
		same way that $\varphi$ acts on $\calP$. To see that this defines an automorphism, it suffices
		to show that $\tilde{\varphi}$ preserves the edges of color $n$, and this is true since
		\[ (\Lambda_i)^n \tilde{\varphi} = \Lambda_{i \pm 1} \tilde{\varphi} = (\Lambda \varphi)_{i \pm 1} = ((\Lambda \varphi)_i)^n = (\Lambda_i \tilde{\varphi})^n. \]
	
		If $\Psi$ is blue instead, then the action of
		$\varphi$ on $\calP$ reverses the color of every flag. Then we define $\tilde{\varphi}$ 
		so that, for each flag $\Lambda$ of $\calP$,
		\[ (\Lambda_i) \tilde{\varphi} = (\Lambda \varphi)_{k+2-i}. \]
		Again, this will define an automorphism if and only if $\tilde{\varphi}$ preserves the edges of color $n$, and this is true since
		\[ (\Lambda_i)^n \tilde{\varphi} = \Lambda_{i \pm 1} \tilde{\varphi} = (\Lambda \varphi)_{k+2-i \mp 1} = ((\Lambda \varphi)_{k+2-i})^n = (\Lambda_i \tilde{\varphi})^n, \]
		where the third equality follows because $\Lambda \varphi$ is the opposite color of $\Lambda$, and so the matching
		of $\Lambda \varphi$ is in the opposite direction of the matching of $\Lambda$ (that is, $\mp$ instead of $\pm$).
		So in either case, we see that each automorphism of $\calP$ lifts to an automorphism of $\calP|k$; 
		in other words, $\calP|k$ is \emph{hereditary} (see \cite{hereditary}).

		In addition to these automorphisms $\tilde{\varphi}$, which all fix the first layer setwise, there are automorphisms
		of $\calP|k$ that simply permute the layers. Indeed, it is clear from the symmetry of the graph (see 
		\cref{flat-ext-figure}) that there is an automorphism $\alpha$ that sends each $\Lambda_i$ to $\Lambda_{k+3-i}$
		and an automorphism $\beta$ that sends each $\Lambda_i$ to $\Lambda_{k+5-i}$ (with the subscripts of $\Lambda$
		reduced modulo $k$). The subgroup $\langle \alpha, \beta \rangle$ acts transitively on the layers, and the orbit of
		the flag $\Lambda_1$ is all flags $\Lambda_j$.
		
		We can now characterize the automorphism group of $\calP|k$.
		
		\begin{proposition} \label{pk-auts}
		Let $\calP$ be a facet-bipartite $n$-polytope and let $k$ be a positive even integer.
		Let $\tilde{\varphi}$, $\alpha$ and $\beta$ be defined as above. 
		\begin{enumerate}
		\item $\calP|k$ is hereditary.
		\item $\G(\calP|k) \cong \G(\calP) \ltimes \langle \alpha, \beta \rangle$.
		\item The symmetry type graph of $\calP|k$ is obtained from the symmetry type graph of $\calP$ by adding
		semi-edges labeled $n$ to each node. In particular, $\calP|k$ is regular if and only if $\calP$ is regular.
		\end{enumerate}		
		\end{proposition}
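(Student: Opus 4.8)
Parts (a)--(c) all rest on the explicit automorphisms $\tilde\varphi$, $\alpha$, $\beta$ produced before the statement. Part (a) needs no further argument: the computation carried out just above shows that every $\varphi\in\G(\calP)$ lifts to an automorphism $\tilde\varphi$ of $\calP|k$, which is exactly what it means for $\calP|k$ to be hereditary, so I would simply cite that discussion.

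For part (b) I would set $\tilde\G=\{\tilde\varphi:\varphi\in\G(\calP)\}$ and $L=\langle\alpha,\beta\rangle$, and verify the three conditions for $\G(\calP|k)=\tilde\G\ltimes L$. The workhorse is that, since automorphisms commute with the operations $\Phi\mapsto\Phi^c$ and $\calP|k$ is connected, an automorphism is determined by the image of a single flag. I would first identify the setwise stabilizer of the first layer $\calP_1$: both formulas defining $\tilde\varphi$ send $\calP_1$ to itself (note $k+2-1\equiv 1\pmod k$), so $\tilde\G$ fixes $\calP_1$; conversely any automorphism fixing $\calP_1$ restricts there to some $\varphi\in\G(\calP)$ and, being determined by its action on one flag, must equal $\tilde\varphi$. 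Hence the stabilizer of $\calP_1$ is exactly $\tilde\G\cong\G(\calP)$. Since $L$ permutes the layers transitively, for any $g$ I can pick $\ell\in L$ with $\ell g$ fixing $\calP_1$, so $\ell g\in\tilde\G$ and $\G(\calP|k)=\tilde\G L$; and $\tilde\G\cap L=1$ because the only lift that permutes layers without moving the underlying flag is the identity.

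The remaining and, I expect, hardest point of part (b) is the normality of $L$. I would first observe that $L$ is precisely the group of layer-permuting automorphisms $\ell_\sigma\colon\Lambda_i\mapsto\Lambda_{\sigma(i)}$, where $\sigma$ runs over the symmetries of the layer-cycle that preserve the red/blue matching pattern, namely the even rotations together with the reflections $i\mapsto c-i$ with $c$ odd; a short check shows this subgroup equals $\langle\alpha,\beta\rangle$ and has order $k$. Normality then reduces, via $\G(\calP|k)=\tilde\G L$, to showing $\tilde\varphi L\tilde\varphi^{-1}=L$, and conjugating $\ell_\sigma$ by $\tilde\varphi$ yields $\ell_\sigma$ in the color-preserving case and $\ell_{\tau\sigma\tau}$ in the color-reversing case, where $\tau(i)=k+2-i$. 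Since conjugation by $\tau$ sends even rotations to even rotations and odd reflections to odd reflections, the result lands back in $L$. The delicate part here is the bookkeeping: the matching direction at $\Lambda_i$ depends on both the color of $\Lambda$ and the parity of $i$, and one must keep these straight throughout.

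For part (c) I would compute the $\G(\calP|k)$-orbits of flags. Because $L$ is transitive on layers and fixes underlying flags while $\tilde\G$ realizes all of $\G(\calP)$, the orbit of $\Lambda_i$ is $\{(\Lambda\varphi)_j:\varphi\in\G(\calP),\,j\}$; thus two flags of $\calP|k$ share an orbit if and only if their underlying flags of $\calP$ do, giving a canonical bijection between the nodes of the two symmetry type graphs. For a color $c\le n-1$ the adjacency $\Lambda_i\mapsto(\Lambda^c)_i$ stays in one layer, so these edges reproduce the symmetry type graph of $\calP$ exactly; for color $n$ the neighbor $\Lambda_{i\pm 1}$ has the same underlying flag and hence lies in the same orbit, contributing a semi-edge labeled $n$ at every node. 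The final claim is then immediate, since the symmetry type graph of $\calP|k$ collapses to a single node exactly when that of $\calP$ does.
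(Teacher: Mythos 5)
Your proposal is correct and follows essentially the same route as the paper: lift each $\varphi \in \G(\calP)$ to $\tilde{\varphi}$, show $\G(\calP|k) = \tilde{\G}\,\langle \alpha, \beta \rangle$ with trivial intersection and $\langle \alpha, \beta \rangle$ normal, and read off the symmetry type graph from the orbits $\{(\Lambda\varphi)_j\}$. The only difference is one of detail: where the paper asserts that the conjugate of a subscript-only automorphism is subscript-only and hence lies in $\langle \alpha, \beta \rangle$, you explicitly identify $\langle \alpha, \beta \rangle$ as the full group of subscript-only automorphisms (the even rotations together with the odd reflections of the layer cycle, of order $k$) and verify closure under conjugation by $\tau(i) = k+2-i$, which makes the paper's brisk normality step fully airtight.
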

		
		\begin{proof}
		The first part was already proved in the previous discussion. For the second part, 
		let us first show that every automorphism in $\G(\calP|k)$ may be written as $\tilde{\varphi} \gamma$, with
		$\varphi \in \G(\calP)$ and $\gamma \in \langle \alpha, \beta \rangle$. Fix a base flag $\Phi$ of $\calP$,
		and suppose that an automorphism $\psi$ of $\calP|k$ sends $\Phi_1$ to $\Psi_j$. Then there must be
		an automorphism $\varphi$ of $\calP$ that sends $\Phi$ to $\Psi$, and the induced automorphism $\tilde{\varphi}$
		sends $\Phi_1$ to $\Psi_1$. Then there is some $\gamma \in \langle \alpha, \beta \rangle$ that sends $\Psi_1$
		to $\Psi_j$, and so $\tilde{\varphi} \gamma$ sends $\Phi_1$ to $\Psi_j$. Since polytope automorphisms
		are determined by their action on any one flag, this shows that $\psi = \tilde{\varphi} \gamma$.
		
		Next, we note that $\alpha$ and $\beta$ both only change the subscript of a flag independently of the underlying
		flag of $\calP$. Similarly, $\tilde{\varphi}$ only changes the underlying flag, independent of the subscript.
		So if $\gamma \in \langle \alpha, \beta \rangle$, then
		$\tilde{\varphi}^{-1} \gamma \tilde{\varphi}$ also only changes the subscript of each flag independently
		of the underlying flag, and so $\tilde{\varphi}^{-1} \gamma \tilde{\varphi} \in \langle \alpha, \beta \rangle$.
		So $\langle \alpha, \beta \rangle$ is normal in $\G(\calP|k)$. Finally, since each $\tilde{\varphi}$ fixes
		the first layer setwise, whereas no nontrivial element of $\langle \alpha, \beta \rangle$ fixes the first layer,
		we find that $\langle \alpha, \beta \rangle \cap \G(\calP) = \langle 1 \rangle$, and so $\G(\calP|k) \cong
		\G(\calP) \ltimes \langle \alpha, \beta \rangle$.
		
		For the last part, note that the orbit of each $\Lambda_i$ under $\langle \alpha, \beta \rangle$ consists of all $k$ flags of the form
		$\Lambda_j$, and so these flags are all identified under the quotient by $\G(\calP|k)$. In particular, each flag
		is in the same orbit as its $n$-adjacent flag. Furthermore, any pair of flags $\Phi_i$ and $\Psi_j$ that lie
		in the same orbit must have underlying flags $\Phi$ and $\Psi$ that lie in the same orbit of $\G(\calP)$,
		and so the symmetry type graph of $\calP|k$ is just the symmetry type graph of $\calP$ with extra semi-edges
		labeled $n$ at each node.
		\end{proof}

		Let us now show some nice properties of $\calP|k$ related to covers.
		
		\begin{proposition} \label{p:respect-covers}
		If $\calP$ and $\calQ$ are facet-bipartite polytopes such that $\calQ$ covers $\calP$,
		then $\calQ|k$ covers $\calP|k$ for every even positive integer $k$.
		\end{proposition}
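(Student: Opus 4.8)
The plan is to build the covering of $\calP|k$ layer by layer from the given covering of $\calP$, exploiting the fact that the matching in the construction of $\calP|k$ depends only on the red/blue coloring of the flags. Write $f \colon \calQ \to \calP$ for the given surjective color-preserving homomorphism. First I would fix compatible base flags: choose any base flag $\Lambda_0$ of $\calQ$, declare both $\Lambda_0$ and its image $f(\Lambda_0)$ to be red, and then color every flag of $\calQ$ and of $\calP$ by the parity of the number of edges labeled $n-1$ along a path back to the respective base flag. This is well-defined precisely because each polytope is facet-bipartite (there are no cycles with an odd number of edges labeled $n-1$), and it is the coloring used to define $\calQ|k$ and $\calP|k$.

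The key preliminary observation — and the conceptual crux of the argument — is that $f$ \emph{preserves} this coloring. Since $f$ maps every edge of color $n-1$ to an edge of color $n-1$, any path in $\calQ$ from $\Lambda_0$ to a flag $\Lambda$ projects to a path in $\calP$ from $f(\Lambda_0)$ to $f(\Lambda)$ that uses the same number of edges labeled $n-1$. Hence $\Lambda$ and $f(\Lambda)$ have the same parity, and so the same color. Once this is in place, the matchings of color $n$ on the two sides are forced to agree, and the rest of the verification is routine. I expect this coloring-compatibility step to be the main thing to get right, because the whole construction is governed by the coloring; everything else is bookkeeping.

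With the colorings matched, define $\tilde{f} \colon \calQ|k \to \calP|k$ by $\tilde{f}(\Lambda_i) = (f(\Lambda))_i$ for each flag $\Lambda$ of $\calQ$ and each layer $i \in \{1, \ldots, k\}$. Surjectivity of $\tilde{f}$ is immediate from surjectivity of $f$, since every flag of $\calP|k$ has the form $\Lambda'_i$ with $\Lambda' = f(\Lambda)$ for some $\Lambda$. It then remains only to check that $\tilde{f}$ preserves all edge colors. For a color $c \in \{0, \ldots, n-1\}$, the edge $(\Lambda_i)^c = (\Lambda^c)_i$ stays within layer $i$, and since $f$ preserves color $c$ one computes directly that $\tilde{f}((\Lambda_i)^c) = (f(\Lambda^c))_i = ((f(\Lambda))^c)_i = (\tilde{f}(\Lambda_i))^c$. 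For color $n$, the matched flag $(\Lambda_i)^n = \Lambda_{i \pm 1}$, where the sign $\pm$ depends only on the color of $\Lambda$ and the parity of $i$; because $f$ preserves the color of $\Lambda$, the identical sign governs the matching of $f(\Lambda)$ in $\calP|k$, so $\tilde{f}((\Lambda_i)^n) = (f(\Lambda))_{i \pm 1} = ((f(\Lambda))_i)^n = (\tilde{f}(\Lambda_i))^n$. Thus $\tilde{f}$ is a surjective color-preserving homomorphism and $\calQ|k$ covers $\calP|k$.
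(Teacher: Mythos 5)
Your proof is correct and takes essentially the same approach as the paper's: both normalize the colorings so that a base flag and its image are red, and then extend $f$ layerwise via $\Lambda_i \mapsto (f(\Lambda))_i$. You simply spell out the two facts the paper compresses into ``the obvious extension \ldots\ will also respect the edges of color $n$'' --- namely that a color-preserving homomorphism preserves the parity coloring, and that this forces the color-$n$ matchings on the two sides to agree.
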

		
		\begin{proof}
		To say that $\calQ$ covers $\calP$ is to say that there is a color-preserving graph epimorphism
		$\varphi$ from $\calQ$ to $\calP$. Fix a flag $\Psi$ of $\calQ$ and let $\Phi = (\Psi) \varphi$.
		Without loss of generality, we may color both $\Phi$ and $\Psi$ red, so that $\Phi_1$ is matched to
		$\Phi_2$ and $\Psi_1$ is matched to $\Psi_2$. Then the obvious extension of $\varphi$
		that acts separately on each layer of $\calQ|k$ will also respect the edges of color $n$, and thus
		$\calQ|k$ covers $\calP|k$.
		\end{proof}
		
		\begin{proposition}
		If $\calP$ is a facet-bipartite polytope and $k_1$ and $k_2$ are positive even integers with $k_2$
		a multiple of $k_1$, then $\calP|k_2$ covers $\calP|k_1$. In particular, for every even positive integer
		$k$, the polytope $\calP|k$ covers the trivial extension $\calP|2$.
		\end{proposition}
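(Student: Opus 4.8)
The plan is to exhibit an explicit covering map obtained by reducing layer indices. Write $k_2 = m k_1$. I index the layers of $\calP|k$ by $\mathbb{Z}/k\mathbb{Z}$, reading the matching wraparound modulo $k$ exactly as in the construction. Since $k_1 \mid k_2$, there is the natural reduction homomorphism $\rho \colon \mathbb{Z}/k_2\mathbb{Z} \to \mathbb{Z}/k_1\mathbb{Z}$, and I would define $\pi \colon \calP|k_2 \to \calP|k_1$ by $\pi(\Lambda_i) = \Lambda_{\rho(i)}$; that is, $\pi$ fixes the underlying flag of $\calP$ and reduces its layer index modulo $k_1$. This is surjective because $\rho$ is, so it remains only to check that $\pi$ preserves every edge color, i.e. that $\pi((\Lambda_i)^c) = (\pi(\Lambda_i))^c$ for all colors $c$.

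For each color $c \in \{0, \ldots, n \!-\! 1\}$ this is immediate: the edge $(\Lambda_i)^c = (\Lambda^c)_i$ stays inside a single layer, and since $\pi$ acts as the identity on the underlying flag of $\calP$ and never mixes layers, the identity $\pi((\Lambda^c)_i) = (\Lambda^c)_{\rho(i)} = (\Lambda_{\rho(i)})^c$ holds automatically. The content is entirely in color $n$, the matching. First I would record the matching as a clean formula on $\mathbb{Z}/k\mathbb{Z}$: for a fixed flag $\Lambda$, its color-$n$ neighbor lies in layer $i \!+\! 1$ or $i \!-\! 1$, where the sign is determined by the red/blue color of $\Lambda$ together with the parity of $i$. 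Crucially, this is a well-defined involution on $\mathbb{Z}/k\mathbb{Z}$ precisely because $k$ is even, which is what makes ``parity of $i$'' meaningful modulo $k$.

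The key point is that $\rho$ intertwines the two matchings, and this is exactly where the hypotheses enter. Because $k_1$ and $k_2$ are both even and $k_1 \mid k_2$, the parity map factors as $\mathbb{Z}/k_2\mathbb{Z} \to \mathbb{Z}/k_1\mathbb{Z} \to \mathbb{Z}/2\mathbb{Z}$, so $\rho$ preserves parity; and $\rho$ is a homomorphism, so $\rho(i \pm 1) = \rho(i) \pm 1$. Since the color of $\Lambda$ is intrinsic to $\calP$, hence unchanged by $\pi$, the sign dictating the matching at $\Lambda_i$ and at $\Lambda_{\rho(i)}$ is the same, and therefore $\pi((\Lambda_i)^n) = (\pi(\Lambda_i))^n$. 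I expect this intertwining to be the main (and essentially only) obstacle: passing to $\mathbb{Z}/k\mathbb{Z}$ lets the homomorphism property of $\rho$ absorb all of the wraparound bookkeeping uniformly, so that an interior matching edge of $\calP|k_2$ may legitimately map onto a wraparound edge of $\calP|k_1$, but one must verify that the sign convention really is parity-driven so that it transports correctly — and this is where the evenness of $k_1$ is indispensable. Once this is established, $\pi$ is a color-preserving graph epimorphism, so $\calP|k_2$ covers $\calP|k_1$. The final assertion is the special case $k_1 = 2$: there $\calP|2$ is the trivial extension, and every even $k$ is a multiple of $2$.
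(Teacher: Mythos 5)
Your proposal is correct and is essentially the paper's proof: the paper also defines the covering by sending each $\Phi_i$ to $\Phi_{i \bmod k_1}$, and your verification that the matching is parity-driven (so that reduction modulo the even number $k_1$ intertwines the color-$n$ edges) is exactly the detail the paper leaves implicit when it asserts the map is a color-preserving graph epimorphism.
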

		
		\begin{proof}
		The function taking each $\Phi_i$ to $\Phi_{i \textrm{(mod $k_1$)}}$ is a color-preserving graph epimorphism.
		\end{proof}
	
		Next, we note that it is possible to repeatedly apply this construction:

		\begin{proposition}
		\label{iterated-extension}
		If $\calP$ is a facet-bipartite $n$-polytope, then for every finite sequence $k_1, \ldots, k_m$ with each
		$k_i$ a positive even integer, there is a facet-bipartite polytope $\calQ = \calP|k_1|k_2|\cdots|k_m$ that is $(i, i+2)$-flat
		for each $i$ in $\{n \!-\! 2, \ldots, n+m-3\}$. Furthermore, $\calQ$ is regular if $\calP$ is regular.
		\end{proposition}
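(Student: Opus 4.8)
The plan is to induct on $m$, peeling off the last extension. The statement for $m$ reduces to the statement for $m-1$ together with a single-step claim: if $\calR$ is a facet-bipartite polytope of rank $N$ that is $(i,i+2)$-flat for each $i \in \{n-2, \ldots, N-3\}$ (and is regular when $\calP$ is), then for every even $k$ the graph $\calR|k$ is a facet-bipartite polytope of rank $N+1$ that is $(i,i+2)$-flat for each $i \in \{n-2, \ldots, N-2\}$ (and is regular when $\calP$ is). Applying this with $\calR = \calP|k_1|\cdots|k_{m-1}$ and $k = k_m$, and iterating from the base case $\calR = \calP$ (where the flatness range $\{n-2, \ldots, n-3\}$ is empty), produces $\calQ = \calP|k_1|\cdots|k_m$ of rank $n+m$ that is $(i,i+2)$-flat on $\{n-2, \ldots, n+m-3\}$. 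Within the single step, the assertions that $\calR|k$ is a polytope with $k$ facets isomorphic to $\calR$ and that it is $(N-2,N)$-flat (which supplies the \emph{top} value $i = N-2$ of the new range) are exactly \cref{pk-polytope}, applied to $\calR$; and that $\calR|k$ is regular iff $\calR$ is — hence iff $\calP$ is, by the inductive hypothesis — is \cref{pk-auts}(3). So the genuine work is twofold: (1) show that $\calR|k$ is again facet-bipartite, so the construction may be reapplied, and (2) show that the lower flatness properties of $\calR$ survive the extension.

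For (1) I would identify the facet graph of $\calR|k$. Its facets are the $N$-faces, that is, the $\{0, \ldots, N-1\}$-color-components, which are precisely the $k$ layers $\calR_1, \ldots, \calR_k$, each a copy of $\calR$ closed under colors $0, \ldots, N-1$. Two layers are joined in the facet graph exactly when an edge of color $N$ runs between them, and by the definition of the matching each layer is color-$N$-adjacent to exactly its two cyclic neighbours $j\pm 1 \pmod k$. Hence the facet graph of $\calR|k$ is the cycle $C_k$, which is bipartite because $k$ is even; so $\calR|k$ is facet-bipartite regardless of $\calR$. This is the point that makes the iteration legal: the output of one extension always satisfies the hypothesis required for the next.

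For (2) I would prove a preservation lemma: if $\calR$ has rank $N$ and is $(i,i+2)$-flat with $i \le N-3$, then $\calR|k$ is $(i,i+2)$-flat. By \cref{flat-crit} it suffices, given a flag $\Phi_a$ and an $(i+2)$-face $F$ of $\calR|k$, to build a path to $F$ using only colors $\{i+1, \ldots, N\}$, and I would do this in two stages. First, the $(N-1,N)$-color-component through $\Phi_a$ is the ``big cycle'' exhibited in the proof of \cref{pk-polytope}, which visits every layer; travelling along it (colors $N-1$ and $N$) carries $\Phi_a$ into any prescribed layer $c$, and I choose $c$ to be a layer that $F$ meets. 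Second, within layer $c$ — an isomorphic copy of $\calR$ — the hypothesis that $\calR$ is $(i,i+2)$-flat lets me reach every $(i+2)$-face of that copy using colors $\{i+1, \ldots, N-1\}$, in particular the $(i+2)$-faces making up $F \cap \calR_c \subseteq F$. Both stages use colors inside $\{i+1, \ldots, N\}$ (here $N-1 \ge i+1$ since $i \le N-3$), so their concatenation does too.

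The main obstacle is this preservation step, and specifically the borderline case $i = N-3$, where $i+2 = N-1$: there $F$ omits color $N-1$, so it does not run along the big cycle and in fact meets only two consecutive layers, because all flags of a single facet of $\calR$ share the same bipartition color and are therefore matched to the same neighbouring layer. The two-stage argument is designed precisely to sidestep this, since flatness requires only that $\Phi_a$ \emph{reach} $F$, not that $F$ meet every layer: the big cycle is used merely to deliver $\Phi_a$ to one of the two layers that $F$ does meet, after which the inherited flatness of $\calR$ finishes the job. Once the facet-graph computation and the preservation lemma are in place, the inductive step closes — the value $i = N-2$ comes from \cref{pk-polytope}, the values $i \in \{n-2, \ldots, N-3\}$ are carried up by the lemma, regularity is carried by \cref{pk-auts}(3), and facet-bipartiteness is carried by (1) — completing the passage from rank $N$ to rank $N+1$.
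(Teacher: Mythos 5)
Your proposal is correct, and its skeleton matches the paper's: iterate the single-step extension, observe that the facet graph of each $\calR|k$ is an even cycle $C_k$ (so facet-bipartiteness is restored at every step, making the iteration legal), and carry regularity via \cref{pk-auts}(c). The difference is one of completeness. The paper's proof is two sentences long: it records exactly the facet-graph observation and the citation of \cref{pk-auts}(c), and treats the persistence of the lower flatness values --- that $(i,i+2)$-flatness of $\calR$ for $i \leq N-3$ survives passage to $\calR|k$ --- as immediate, saying nothing about it. Your preservation lemma is precisely the content the paper omits, and your two-stage argument is sound: ride the $(N\shm 1,N)$-cycle from \cref{pk-polytope} (colors $N\shm 1, N \geq i+1$ since $i \leq N-3$) into a layer met by the target $(i\shp 2)$-face, note that the face's intersection with that layer contains a full $(i\shp 2)$-face of the copy of $\calR$, then finish with \cref{flat-crit} applied to $\calR$; the concatenated path uses only colors $\{i+1, \ldots, N\}$, which is what \cref{flat-crit} demands for $\calR|k$. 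Your treatment of the borderline case $i = N-3$ is also a genuine point: an $(N\shm 1)$-face of $\calR|k$ meets only two consecutive layers (all flags of one facet of $\calR$ share a bipartition color, hence match in the same direction), so one really must route the flag to the face rather than expect the face to spread across layers --- a subtlety invisible in the paper's ``follows immediately.'' In short: same approach, but your writeup supplies a nontrivial verification the paper leaves to the reader, and it buys an honest, checkable induction.
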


		\begin{proof}
		The first part follows immediately from the fact that the facet graph of $\calP|k$ is an even cycle (consisting of the $k$ layers $\calP_i$), and so $\calP|k$ is
		facet-bipartite. The second part follows from \cref{pk-auts}(c).
		\end{proof}

		\begin{example}
		For any sequence of positive even integers $k_1, \ldots, k_m$, we can take $\calP$ to be a $k_1$-gon
		and then extend it by $k_2, \ldots, k_m$. This yields a regular $(m \!+\! 1)$-polytope that is $(i,i+2)$-flat for each $i$ in $\{0, \ldots, m \!-\! 1\}$.
		In fact, this is a \emph{tight polytope of type $\{k_1, \ldots, k_m\}$}; see \cite{tight-polytopes}.
		\end{example}

	\subsection{Flat extensions of other polytopes}
	\label{other-exts}

		We have seen that if $\calP$ is facet-bipartite, then there is a straightforward matching on $k$ copies of
		$\calP$ that yields a polytope $\calP|k$. What can we do with other polytopes $\calP$? Let us fix an even $k \geq 4$
		and try to build an $(n \!-\! 2, n)$-flat $(n \!+\! 1)$-polytope $\calQ$ with $k$-facets isomorphic to $\calP$.
		As before, we will focus on the case where each flag $\Phi_i$ is matched to some $\Phi_j$.
		
		First, recall that \cref{facet-coloring} says that in order for $\calQ$ to exist, $\calP$ must be $(k \!-\! 1)$-facet-colorable. 
		Naturally, we wonder whether this necessary condition is also sufficient. Suppose $\mu$ is a proper coloring of
		the facet graph of $\calP$, with colors $1, 2, \ldots, k \!-\! 1$, (though some colors may not be used). As before, we can extend this to a (non-proper) coloring
		of $\calP$ itself by coloring each flag according to the color of its facet. Take $k$ copies of $\calP$ as before: 
		$\calP_1, \ldots, \calP_k$, with each $\Phi_i$ colored the same as $\Phi$. For each color $c$, we designate
		a perfect matching $\sigma_c$ of the layers, and if $\Phi$ is color $c$, then we match $\Phi_i$ to
		$\Phi_{\sigma_c(i)}$. Since $\mu$ is a proper coloring of the facet graph, this ensures that flags in $\calP_k$ that 
		are $(n \!-\! 1)$-adjacent are matched to distinct layers, as required (see \cref{different-layers}).

		To determine whether the matchings $\sigma_c$ satisfy the desired properties, it is helpful to represent them
		using a new graph called the \emph{layer graph}. This is a graph on $k$ nodes, corresponding to the $k$
		layers $\calP_1, \ldots, \calP_k$, where there is an edge of color $c$ between two nodes if $\sigma_c$ matches
		the corresponding layers. See \cref{layer-graph-figure} for an example with $k = 6$.

		\begin{figure}[htbp]
		\begin{center}
		\includegraphics[height=4cm]{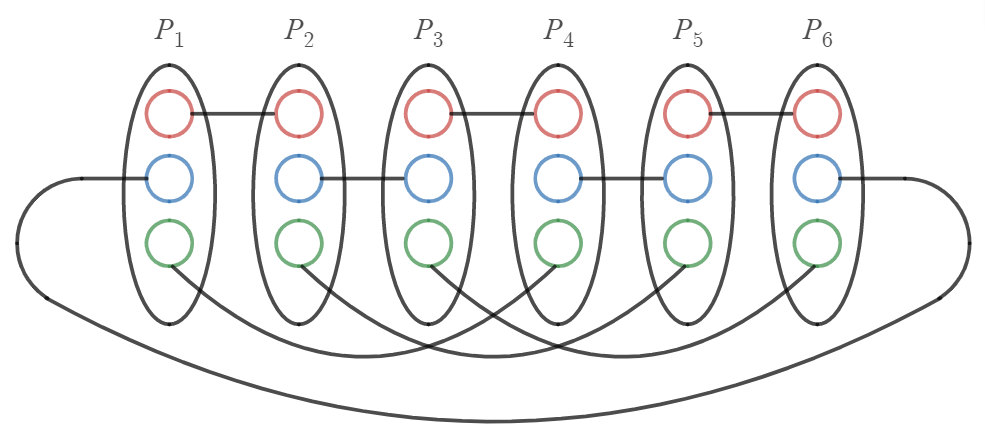} \\
		\includegraphics[height=4cm]{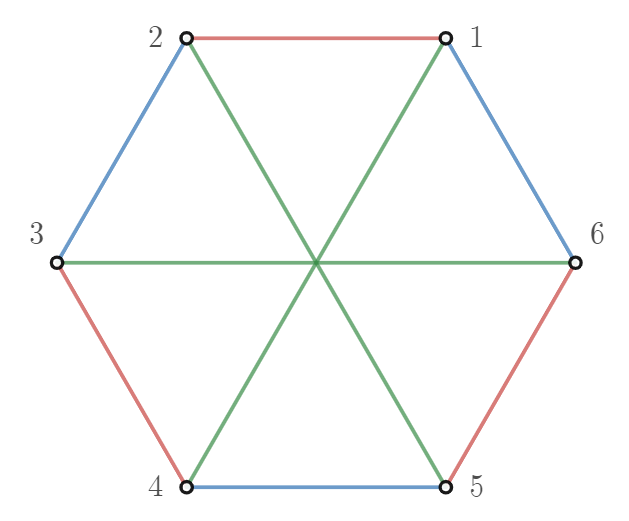}
		\caption{A matching of layers by color (above) and the corresponding layer graph (below).}
		\label{layer-graph-figure}
		\end{center}
		\end{figure}

		Our goal is to pick matchings so that we obtain an $(n \!-\! 2, n)$-flat $(n \!+\! 1)$-polytope.
		Recall that to be $(n \!-\! 2,n)$-flat means that, for every flag $\Phi_i$, the cycle that starts from $\Phi_i$ and follows
		edges labeled $n \!-\! 1$ and $n$ should intersect every layer. Note that such a cycle consists only of flags
		of the form $\Phi_j$ and $\Psi_j$, where $\Psi = \Phi^{n \!-\! 1}$. Therefore, the cycle is completely
		determined by the matchings corresponding to the colors of $\Phi$ and $\Psi$. Thus, if every pair of matchings
		of the layers yields a single cycle, then the result will be $(n \!-\! 2,n)$-flat. In terms of the layer graph, this means that
		it suffices for every pair of colors to yield a single cycle. Such a collection
		of matchings is called a \emph{perfect $1$-factorization} of the graph. Kotzig conjectured in
		1964 that every complete graph on an even number of vertices has a perfect $1$-factorization \cite{kotzig-conj}. This conjecture
		remains open; see \cite{perfect-1-factorizations} for a recent survey on this and related problems.

		In any case, let us suppose that the complete graph $K_k$ admits a perfect $1$-factorization, and match flags
		$\Phi_i$ accordingly. As discussed, this will give us something that is $(n \!-\! 2,n)$-flat. We still need to demonstrate
		that it is a polytope.
		
		\begin{theorem}
		\label{pk-polytope-2}
		Let $k$ be a positive even integer, $k \geq 4$, and let $\calP$ be $(k \!-\! 1)$-facet-colorable.
		Suppose that the complete graph $K_k$ has a perfect $1$-factorization. Then the preceding construction defines
		the flag graph of a polytope.
		\end{theorem}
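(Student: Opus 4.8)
The plan is to follow the three-part template established in the proof of \cref{pk-polytope}: first verify that the construction yields an $(n+1)$-pre-maniplex in which color $n$ commutes with each of the colors $0, \ldots, n-2$ (so that it is a maniplex), then invoke the discussion preceding the theorem for $(n-2,n)$-flatness, and finally establish the Path Intersection Property. Since flatness was already argued from the perfect $1$-factorization hypothesis (every pair of colors, read off in the layer graph, spans a single cycle through all $k$ vertices, so each $(n-1,n)$-color component meets every layer), the two things that actually require checking are the maniplex condition and the Path Intersection Property.

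For the maniplex condition, first note that each $\sigma_c$ is a $1$-factor of $K_k$, hence a fixed-point-free perfect matching; therefore every flag $\Phi_i$ receives exactly one edge of color $n$, joining it to a flag in a different layer, so the resulting graph is a simple $(n+1)$-regular properly $(n+1)$-edge-colored graph. The only delicate point is that the matching now depends on the color $c$ of $\Phi$, so one must check that color $n$ still commutes with colors $0, \ldots, n-2$. The resolution is that the flag-coloring is constant on each facet: each flag is colored by the $\mu$-color of its $\{0, \ldots, n-2\}$-color component, so for any $d \in \{0, \ldots, n-2\}$ the flag $\Phi^d$ lies in the same facet as $\Phi$ and carries the same color $c$. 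Writing $c$ for the color of $\Phi$, this gives
\[ (\Phi_i)^{n,d} = (\Phi_{\sigma_c(i)})^d = (\Phi^d)_{\sigma_c(i)} = ((\Phi^d)_i)^n = (\Phi_i)^{d,n}, \]
where the third equality uses that $\Phi^d$ has color $c$. Thus color $n$ commutes with colors $0, \ldots, n-2$, and the graph is an $(n+1)$-maniplex. This facet-constancy is exactly the induced-matching phenomenon recorded in \cref{induced-matching}, and it is the main thing that could have gone wrong once the matching is allowed to vary with $c$.

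The Path Intersection Property then transfers essentially verbatim from the proof of \cref{pk-polytope}, because the only feature of the matching used there is that every edge of color $n$ joins two flags with the same underlying flag of $\calP$, which still holds since $\Phi_i$ is matched to $\Phi_{\sigma_c(i)}$. Given colors $i < j$ and flags joined by a path using colors $i+1, \ldots, n$ and another using colors $0, \ldots, j-1$, the second path avoids color $n$ and so stays in a single layer; taking both flags to be $\Phi_1, \Psi_1$ in layer $1$, deleting the color-$n$ edges from the first path projects it to a path from $\Phi$ to $\Psi$ in $\calP$ using colors $i+1, \ldots, n-1$, while the second projects to one using $0, \ldots, j-1$. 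The Path Intersection Property of $\calP$ then yields a path from $\Phi$ to $\Psi$ using only colors $i+1, \ldots, j-1$, and since this path uses no color $n$ it lifts isomorphically back into layer $1$.

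The main obstacle is the commutation check in the second paragraph: allowing $\sigma_c$ to depend on $c$ is precisely what separates this construction from $\calP|k$, and it is a priori unclear that color $n$ still commutes with the lower colors; everything hinges on the coloring being pulled back from the facet graph and hence constant on each facet. I would also flag here one point on which the referenced flatness argument silently depends, namely that the two facets on either side of each $(n-1)$-edge receive distinct colors. This follows from the properness of $\mu$ together with the hypothesis that the facet graph is $(k-1)$-colorable (in particular loopless), so that each $(n-1,n)$-color component corresponds to a genuine pair of \emph{distinct} $1$-factors, which is exactly what guarantees a single cycle and hence the required $(n-2,n)$-flatness.
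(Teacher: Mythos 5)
Your proof is correct and follows essentially the same route as the paper, whose own proof simply shows connectedness and then declares the rest ``analogous to the proof of \cref{pk-polytope},'' with the key point being that each flag is matched to itself in another layer; you in fact supply more detail than the paper does, notably the explicit commutation check (color $n$ with colors $0,\ldots,n\!-\!2$) via facet-constancy of the flag coloring, and the observation that properness of $\mu$ guarantees two \emph{distinct} $1$-factors in each $(n\!-\!1,n)$-color component. The one item the paper writes out that you never state is connectedness of the resulting graph, which is needed for it to be the flag graph of a single polytope; it follows in one line from your own flatness remark --- the facet graph of $\calP$ has at least one edge, so at least two colors are used, the corresponding pair of matchings yields a single cycle meeting every layer, and each layer is connected --- so this is an omission of statement rather than of substance, but you should record it.
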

		
		\begin{proof}
		Let $\calG$ be the graph defined above. First, let us show that it is connected. 
		The facet graph of $\calP$ must use at least two colors, and by construction, the matchings corresponding to those
		two colors must induce a cycle that intersects each layer. Since each layer is connected, this shows that $\calG$
		is itself connected.
		
		The remainder of the proof is analogous to the proof of \cref{pk-polytope}. The key element
		is that each $\Phi_i$ is matched to some $\Phi_j$ -- that is, each flag is matched to ``itself'' in another layer.
		\end{proof}

		\begin{example}
		If $k \!-\! 1$ is prime or $k/2$ is prime, then there is a perfect $1$-factorization of $K_k$; see \cite{kotzig-conj}
		and \cite{finite-topologies}, respectively. Thus, every finite polytope $\calP$ has infinitely many flat extensions --- 
		simply take $k \!-\! 1$ to be a prime that is greater than or equal to the number of facets of $\calP$.
		\end{example}
		
\section{Flat amalgamations}
\label{flat-amalg}

	There is another way of thinking about $\calP|k$ that readily admits one last generalization. It starts with seeing
	$\calP|k$ as a \emph{mix} of $\calP$ with the flag graph of a $k$-gon. A similar construction for regular
	polytopes was described in \cite[Sec. 4F]{arp}, using their automorphism groups instead of their flag graphs.
	For non-regular polytopes, the construction may provide different results depending on the choice of a base
	flag, and so we define the construction using \emph{rooted polytopes} $(\calP, \Phi)$ (see \cite{k-orbit}). 
	
	\begin{definition}
	Suppose that $\calP$ is an $n$-polytope with base flag $\Phi$ and that $\calQ$ is an $m$-polytope with base flag $\Psi$.
	Let $0 \leq r \leq n \!-\! 1$ with also $r \geq n-m$.
	Then the \emph{$r$-mix of $(\calP, \Phi)$ with $(\calQ, \Psi)$}, denoted $(\calP, \Phi) \mix_{r} (\calQ, \Psi)$,
	is the connected, properly edge-colored, $(m+r)$-regular graph $\calM$ defined as follows.
	\begin{enumerate}
	\item The base flag of $\calM$ is the pair $(\Phi, \Psi)$.
	\item For each $i \in \{0, \ldots, m+r-1\}$ and for each flag $(\Lambda, \Delta)$ of $\calM$, we define
	$(\Lambda, \Delta)^i$ to be $(\Lambda^i, \Delta^{i-r})$, with the understanding that if a superscript
	is ``out of bounds'' then we treat it as empty.  In other words:
	\[ (\Lambda, \Delta)^i = 
	\begin{cases}
	(\Lambda^i, \Delta) & \textrm{ if $0 \leq i < r$}, \\
	(\Lambda^i, \Delta^{i-r}) & \textrm{ if $r \leq i \leq n \!-\! 1$}, \\
	(\Lambda, \Delta^{i-r}) & \textrm{ if $n \leq i \leq m+r-1$}
	\end{cases}. \]
	\item The flags of $\calM$ are all pairs $(\Lambda, \Delta)$ (with $\Lambda$ a flag of $\calP$ and $\Delta$ a flag of $\calQ$)
	that are in the same connected component as $(\Phi, \Psi)$.
	\end{enumerate}
	\end{definition}

	\begin{definition}
	Suppose that $\calP$ is an $n$-polytope and that $\calQ$ is an $m$-polytope. Then the \emph{flat amalgamation of $(\calP, \Phi)$ with $(\calQ, \Psi)$} is
	$(\calP, \Phi) \mix_{n \!-\! 1} (\calQ, \Psi)$, denoted $(\calP, \Phi) | (\calQ, \Psi)$. If the base flags are understood in context, then we simply write
	$\calP | \calQ$. Note that, for each $i \in \{0, \ldots, m \!+\! n \!-\! 2\}$ and for each flag $(\Lambda, \Delta)$,
	\[ (\Lambda, \Delta)^i = 
	\begin{cases}
	(\Lambda^i, \Delta) & \textrm{ if $0 \leq i < n \!-\! 1$}, \\
	(\Lambda^{n \!-\! 1}, \Delta^{0}) & \textrm{ if $i=n \!-\! 1$}, \\
	(\Lambda, \Delta^{i-n \!+\! 1}) & \textrm{ if $n \leq i \leq m \!+\! n \!-\! 2$}
	\end{cases}. \]

	\end{definition}

	Recall that $\calP$ is facet-bipartite if and only if there are no cycles in $\calP$ with an odd number of edges labeled $n \!-\! 1$, and
	that $\calQ$ is vertex-bipartite if and only if there are no cycles in $\calQ$ with an odd number of edges labeled $0$.
	
	\begin{proposition}
	Let $\calP$ be an $n$-polytope with base flag $\Phi$ and let $\calQ$ be an $m$-polytope with base flag $\Psi$. Let $\calM = \calP | \calQ$.
	\begin{enumerate}
	\item Each connected component of $\calM[0, \ldots, n \!-\! 1]$ is isomorphic to $\calP$ if and only if $\calP$ is facet-bipartite.
	\item Each connected component of $\calM[n \!-\! 1, \ldots, m \!+\! n \!-\! 2]$ is isomorphic to $\calQ$ (with edge labels increased by $n \!-\! 1$) if and only if $\calQ$ is vertex-bipartite.
	\end{enumerate}
	\end{proposition}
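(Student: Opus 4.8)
The plan is to prove both parts via the two coordinate projections, with part (b) being the formal dual of part (a) under interchanging the roles of $\calP$ and $\calQ$. For part (a), I would study the projection $\pi_1 \colon (\Lambda, \Delta) \mapsto \Lambda$ on a connected component of $\calM[0, \ldots, n \!-\! 1]$. The adjacency rules show that colors $0, \ldots, n \!-\! 2$ send $(\Lambda, \Delta)$ to $(\Lambda^i, \Delta)$ while color $n \!-\! 1$ sends it to $(\Lambda^{n \!-\! 1}, \Delta^0)$, so in every case the first coordinate undergoes exactly the $i$-adjacency of $\calP$. Thus $\pi_1$ is a color-preserving graph homomorphism onto $\calP$; since the $n$ edge-colors at $(\Lambda, \Delta)$ map bijectively onto the $n$ edges at $\Lambda$, it is a covering map, and since $\calP$ is connected it is surjective on each component. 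Everything then reduces to deciding when $\pi_1$ is injective.

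First I would record the behavior of the discarded coordinate: within $\calM[0, \ldots, n \!-\! 1]$ the second coordinate is changed only by color $n \!-\! 1$, which replaces $\Delta$ by $\Delta^0$, and $\Delta^{0,0} = \Delta$. Hence along any walk the second coordinate stays in $\{\Delta_0, \Delta_0^0\}$ and is determined by the parity of the number of color-$(n \!-\! 1)$ edges used. Consequently a closed walk in the component returns the second coordinate to its start if and only if it uses an even number of color-$(n \!-\! 1)$ edges, so \emph{every} cycle of the component has an even number of such edges. If $\calP$ is facet-bipartite, then any two walks from the base flag to flags with a common first coordinate $\Lambda$ have the same parity (otherwise their concatenation would project to an odd color-$(n \!-\! 1)$ cycle of $\calP$), so they have the same second coordinate; thus $\pi_1$ is injective, and being a surjective covering it is an isomorphism onto $\calP$, proving one direction.

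For the converse I would use the bipartiteness criterion as a colored-graph invariant. The previous paragraph shows that every component $C$ of $\calM[0, \ldots, n \!-\! 1]$ has no cycle with an odd number of color-$(n \!-\! 1)$ edges. Since the existence of such a cycle is preserved by any color-preserving isomorphism, and $\calP$ (not being facet-bipartite) does contain one, no component $C$ can be isomorphic to $\calP$. Concretely, an odd cycle at $\Lambda_0$ lifts to a path from $(\Lambda_0, \Delta_0)$ to the distinct flag $(\Lambda_0, \Delta_0^0)$ — distinct because color-$0$ edges of the polytope $\calQ$ are genuine — so $\pi_1$ is in fact two-to-one. Part (b) is then obtained by the symmetric argument with $\pi_2 \colon (\Lambda, \Delta) \mapsto \Delta$: the colors $n \!-\! 1, n, \ldots, m \!+\! n \!-\! 2$ realize the colors $0, 1, \ldots, m \!-\! 1$ of $\calQ$ shifted up by $n \!-\! 1$, with color $n \!-\! 1$ playing the role of $\calQ$'s color $0$; the first coordinate now toggles in $\{\Lambda_0, \Lambda_0^{n \!-\! 1}\}$, and well-definedness of this toggling parity is precisely the absence in $\calQ$ of a cycle with an odd number of color-$0$ edges, i.e. vertex-bipartiteness.

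The step I expect to require the most care is the converse: the natural projection failing to be injective shows only that $\pi_1$ itself is not an isomorphism, not that no other isomorphism exists, so the argument must instead invoke a genuine invariant rather than the map $\pi_1$. The clean choice is the colored-cycle criterion for bipartiteness, which is preserved by color-preserving isomorphism and which the component $C$ satisfies (even color-$(n \!-\! 1)$ count on every cycle) while $\calP$ does not. The remaining verifications — that $\pi_1$ and $\pi_2$ are coverings, and that the relabeling in part (b) matches the adjacency formulas in the definition of $\calP | \calQ$ — are routine.
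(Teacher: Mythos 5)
Your proof is correct and follows the same core strategy as the paper's: project a component of $\calM[0, \ldots, n \!-\! 1]$ onto the first coordinate, observe that the second coordinate is confined to $\{\Delta_0, \Delta_0^0\}$ and toggles exactly with the color-$(n \!-\! 1)$ edges, and tie injectivity of the projection to the absence of cycles in $\calP$ with an odd number of edges labeled $n \!-\! 1$. Where you genuinely diverge is in the converse direction, and your version is the more careful one. The paper's proof concludes with ``$\pi$ is bijective if and only if $\calP$ is facet-bipartite,'' implicitly treating the failure of injectivity of this \emph{particular} map as failure of the component to be isomorphic to $\calP$; strictly, that only rules out $\pi$ as an isomorphism, not some other color-preserving isomorphism. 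You identify exactly this gap and close it with an invariant argument: every closed walk in a component uses an even number of color-$(n \!-\! 1)$ edges (since the second coordinate must return to its start), this property is preserved by any color-preserving isomorphism, and a non-facet-bipartite $\calP$ violates it. This buys a proof that is valid even for infinite polytopes, where the alternative repair --- noting that $\pi$ is uniformly two-to-one, so the component has $2\,|\calF(\calP)|$ flags and cannot match $\calP$ by counting --- would not apply. Your handling of part (b) as the symmetric statement under the shift of labels by $n \!-\! 1$ matches the paper's ``analogous'' remark, and the routine verifications you defer (that $\pi_1$ is a covering, that the relabeling respects the adjacency formulas) are indeed routine.
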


	\begin{proof}
	Without loss of generality, consider the connected component of $\calM[0, \ldots, n \!-\! 1]$ that contains $(\Phi, \Psi)$. Recall that for $i < n \!-\! 1$ we have
	that $(\Lambda, \Delta)^i = (\Lambda^i, \Delta)$, and so each flag in this connected component has either the form $(\Lambda, \Psi)$ or $(\Lambda, \Psi^0)$.
	Now let $\pi: \calM \to \calP$ be the projection in the first coordinate, sending each $(\Lambda, \Delta)$ to $\Lambda$. Since $\calP$ is an $n$-polytope and
	we have edges of labels $0$ through $n \!-\! 1$, $\pi$ is surjective. Furthermore, $\pi$ will be injective (and thus bijective) if and only if there is no flag $\Lambda$
	such that both $(\Lambda, \Psi)$ and $(\Lambda, \Psi^0)$ are in the connected component. A path from $(\Lambda, \Psi)$ to $(\Lambda, \Psi^0)$ exists
	if and only if there is a cycle in $\calP$ that includes $\Lambda$ and has an odd number of edges labeled $n \!-\! 1$. Thus, $\pi$ is bijective if and only if
	no such cycle exists, which is to say if and only if $\calP$ is facet-bipartite.
	
	The proof of the second part is analogous.
	\end{proof}

	In the usual language of polytopes, we say that if $\calP$ is facet-bipartite and $\calQ$ is vertex-bipartite, then
	the $n$-faces of $\calP | \calQ$ are isomorphic to $\calP$ and the co-$(n \!-\! 2)$-faces are isomorphic to $\calQ$. 

	We now collect a few properties of $\calP | \calQ$. Let $\calF(\calM)$ denote the set of flags of the maniplex $\calM$.
	As in \cref{flat-exts1}, we can properly color the facet graph of $\calP$ with two colors, and then extend this coloring
	to the flag graph. Similarly, we can properly color the $1$-skeleton of $\calQ$ with two colors and extend this coloring
	to the flag graph.

	\begin{proposition}
	\label{flag-parity}
	Let $\calP$ be a facet-bipartite $n$-polytope with base flag $\Phi$ and let $\calQ$ be a vertex-bipartite $m$-polytope
	with base flag $\Psi$. Color the flags of $\calP$ red and blue according to a bipartition of its facet graph, and color the flags of
	$\calQ$ red and blue according to a bipartition of its $1$-skeleton, and let us assume that $\Phi$ and $\Psi$ are both red.
	\begin{enumerate}
	\item $\calF(\calP|\calQ) = \{ (\Lambda, \Delta) \in \calF(\calP) \times \calF(\calQ) : \textrm{ $\Lambda$ and $\Delta$ are the same color} \}.$
	\item $|\calF(\calP | \calQ)| = \frac{1}{2} |\calF(\calP)| \cdot |\calF(\calQ)|$.
	\item $\calP | \calQ$ is $(n \!-\! 2,n)$-flat.
	\end{enumerate}
	\end{proposition}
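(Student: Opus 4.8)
The plan is to extract a single parity invariant from the edge rules of $\calM = \calP|\calQ$ and then to lift paths from the two factors into $\calM$. Write $\mathrm{col}(\cdot) \in \{0,1\}$ for the red/blue colour of a flag (of $\calP$ via its facet graph, of $\calQ$ via its $1$-skeleton), with red$\,=0$, so that $\mathrm{col}$ changes exactly across edges of colour $n-1$ in $\calP$ and across edges of colour $0$ in $\calQ$. Reading off the three cases of the edge rule for $\calP|\calQ$: a colour $i$ with $0 \le i < n-1$ moves only the first coordinate by a $\calP$-step of colour $\le n-2$, preserving $\mathrm{col}(\Lambda)$; a colour $i$ with $n \le i \le m+n-2$ moves only the second coordinate by a $\calQ$-step of colour $i-n+1 \ge 1$, preserving $\mathrm{col}(\Delta)$; and the single colour $n-1$ applies $\Lambda \mapsto \Lambda^{n-1}$ and $\Delta \mapsto \Delta^{0}$ at once, flipping both colours. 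Hence along every edge of $\calM$ the truth value of ``$\mathrm{col}(\Lambda)=\mathrm{col}(\Delta)$'' is unchanged, and since the base flag $(\Phi,\Psi)$ is red-red, every flag of $\calM$ is a same-colour pair. This is the inclusion $\subseteq$ in part (a).

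For the reverse inclusion I would lift paths in two phases. Given a same-colour pair $(\Lambda,\Delta)$, lift a $\calP$-path $\Phi \to \Lambda$ into $\calM$ using colours $0,\ldots,n-1$ (each colour-$(n-1)$ step dragging the second coordinate by a $0$-move); this reaches $(\Lambda,\Psi')$, and since the number of $(n-1)$-steps is $\equiv \mathrm{col}(\Phi)\oplus\mathrm{col}(\Lambda)$ we get $\mathrm{col}(\Psi')=\mathrm{col}(\Psi)\oplus\mathrm{col}(\Phi)\oplus\mathrm{col}(\Lambda)=\mathrm{col}(\Lambda)$. Now lift a $\calQ$-path $\Psi' \to \Delta$ (colour $0$ of $\calQ$ lifting to colour $n-1$, colours $1,\ldots,m-1$ to $n,\ldots,m+n-2$); because $\Psi'$ and $\Delta$ share a colour this path has an even number of $0$-edges, so the first coordinate, which only toggles $\Lambda \leftrightarrow \Lambda^{n-1}$ at those steps, returns to $\Lambda$, and the lift ends at $(\Lambda,\Delta)$. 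Part (b) is then a count: by (a), $|\calF(\calP|\calQ)| = r_\calP r_\calQ + b_\calP b_\calQ$, where $r,b$ are the red/blue flag counts; the colour-$(n-1)$ edges form a perfect matching on $\calF(\calP)$ joining red to blue, so $r_\calP = b_\calP = \tfrac12|\calF(\calP)|$, whence $r_\calP(r_\calQ+b_\calQ)=\tfrac12|\calF(\calP)|\cdot|\calF(\calQ)|$.

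For part (c) I would invoke \cref{flat-crit}, which reduces $(n-2,n)$-flatness to showing that the connected component $R$ of an arbitrary flag $(\Lambda,\Delta)$ in $\calM[n-1,n,\ldots,m+n-2]$ meets every $n$-face. Using exactly these colours the second coordinate roams over all of $\calQ$ while the first toggles $\Lambda \leftrightarrow \Lambda^{n-1}$ at each colour-$(n-1)$ step, so $R$ is the set of all $(\Lambda,\delta)$ with $\mathrm{col}(\delta)=\mathrm{col}(\Delta)$ together with all $(\Lambda^{n-1},\delta)$ with $\mathrm{col}(\delta)\ne\mathrm{col}(\Delta)$. Now fix any flag $(\Lambda',\Delta')$ and lift a $\calP$-path $\Lambda' \to \Lambda$ into $\calM$ using colours $0,\ldots,n-1$; this avoids colour $n$, so it stays inside the $n$-face of $(\Lambda',\Delta')$ and ends at some $(\Lambda,\delta^{\ast})$. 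The same bookkeeping gives $\mathrm{col}(\delta^{\ast})=\mathrm{col}(\Delta')\oplus\mathrm{col}(\Lambda')\oplus\mathrm{col}(\Lambda)=\mathrm{col}(\Lambda)=\mathrm{col}(\Delta)$, using that $(\Lambda',\Delta')$ and $(\Lambda,\Delta)$ are each monochromatic, so $(\Lambda,\delta^{\ast}) \in R$. Thus $R$ meets the $n$-face of $(\Lambda',\Delta')$; as that flag was arbitrary, $R$ meets every $n$-face and $\calM$ is $(n-2,n)$-flat.

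The main obstacle throughout is the entanglement forced by colour $n-1$: every change of $\calP$-parity is coupled to a change of $\calQ$-parity and conversely, so one cannot simply adjust the two coordinates independently. The device that tames this is the single identity $\mathrm{col}(\text{endpoint})=\mathrm{col}(\text{start})\oplus(\#\,(n-1)\text{-steps})$ in each factor, together with the observation that returning one coordinate to a same-coloured flag forces an even number of coupled steps in the other. The only delicate point is to apply this identity consistently across both lifts and to check that each lifted path genuinely remains within the prescribed colour set (colours $0,\ldots,n-1$ for the first phase, the full palette for $R$, and avoidance of colour $n$ for the $n$-face argument); the rest is routine.
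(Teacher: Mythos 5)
Your proof is correct and takes essentially the same route as the paper: the same parity invariant (edges of colour $n-1$ flip both coordinate colours, all other edges flip neither) gives the forward inclusion of (a), the same two-phase lift (a $\calP$-path on colours $0,\ldots,n-1$ followed by a $\calQ$-path on colours $n-1,\ldots,m+n-2$) gives the reverse inclusion, and (b) is exactly the count the paper leaves implicit. The only cosmetic differences are in the endgame: you track parities so the second lift lands exactly on $(\Lambda,\Delta)$ and, for (c), describe the $(n-1,\ldots,m+n-2)$-colour component explicitly and invoke the easy direction of \cref{flat-crit}, whereas the paper rules out the endpoint $(\Lambda^{n-1},\Delta)$ via the forward inclusion of (a) and observes that the already-constructed path is a concatenation of an $n$-avoiding path with an $(n-2)$-avoiding path.
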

	
	\begin{proof}
	Suppose that $(\Lambda, \Delta)$ is a flag of $\calP | \calQ$. By the definition of $(\Lambda, \Delta)^j$, either both
	components change color (when $j = n \!-\! 1$) or neither component changes color. Since $\calP | \calQ$ consists of only
	those flags that are reachable from $(\Phi, \Psi)$, which are both red, it follows that all flags of $\calP | \calQ$ have the
	same color in both components. 
	
	Now, suppose that $\Lambda$ and $\Delta$ are arbitrary flags of $\calP$ and $\calQ$ (respectively) that are the same color.
	There is a path in $\calP$ from $\Phi$ to $\Lambda$, and this induces a path in $\calP | \calQ$ that uses only edges
	of colors in $\{0, \ldots, n \!-\! 1\}$. Such a path will either take $(\Phi, \Psi)$ to $(\Lambda, \Psi)$ or to $(\Lambda, \Psi^0)$.
	In the latter case, we may follow an additional edge labeled $n \!-\! 1$ to arrive at $(\Lambda^{n \!-\! 1}, \Psi)$.
	Now, there is a path in $\calQ$ from $\Psi$ to $\Delta$, and this induces a path in $\calP | \calQ$ that uses only edges
	of colors in $\{n \!-\! 1, \ldots, m \!+\! n \!-\! 2\}$. Such a path will take us from $(\Lambda, \Psi)$ or $(\Lambda^{n \!-\! 1}, \Psi)$ to
	$(\Lambda, \Delta)$ or $(\Lambda^{n \!-\! 1}, \Delta)$. By the previous paragraph, since $\Lambda^{n \!-\! 1}$ has a different
	color to $\Delta$, the flag $(\Lambda^{n \!-\! 1}, \Delta)$ cannot be in $\calP | \calQ$, and so we have found a path from
	$(\Phi, \Psi)$ to $(\Lambda, \Delta)$, proving that the latter is a flag of $\calP | \calQ$. The second part follows immediately from the first.
	
	For the third part, we need to show that, given flags flags $(\Phi, \Psi)$ and $(\Lambda, \Delta)$ of $\calP | \calQ$, 
	there is a path from $(\Phi, \Psi)$ to $(\Lambda, \Delta)$ that can be written as the concatenation of a path that
	never uses color $n$ with a path that never uses color $n \!-\! 2$. The path described in the previous paragraph
	already satisfies this condition.
	\end{proof}

	\begin{theorem}
	\label{flat-amalg-polytope}
	Let $\calP$ be a facet-bipartite $n$-polytope and let $\calQ$ be a vertex-bipartite $m$-polytope.
	Let $\calM = \calP | \calQ$. Then $\calM$ is an $(m \!+\! n \!-\! 1)$-polytope that is $(n \!-\! 2,n)$-flat.
	\end{theorem}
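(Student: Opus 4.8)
The plan is to verify the three defining features of an $(m+n-1)$-polytope in turn: that $\calM$ is a maniplex, that it is $(n-2,n)$-flat, and that it satisfies the Path Intersection Property. Flatness is the easiest, since $\calM$ being $(n-2,n)$-flat was already established in \cref{flag-parity}(c). For the maniplex property I would verify that colors $i<j$ with $j-i>1$ commute by a short case analysis on where $i$ and $j$ sit relative to $n-1$. If both lie in $\{0,\dots,n-1\}$, the moves affect only the first coordinate, so commuting reduces to the commuting of colors $i,j$ in $\calP$; if both lie in $\{n-1,\dots,m+n-2\}$, it reduces to commuting in $\calQ$ after shifting labels down by $n-1$; and if one lies in $\{0,\dots,n-2\}$ and the other in $\{n,\dots,m+n-2\}$, the two moves act on different coordinates and commute trivially. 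The only color touching both coordinates is $n-1$, and $n-1$ is never required to commute with its neighbors $n-2$ and $n$.

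The heart of the proof is the Path Intersection Property, which I would approach through the coordinate projections $\pi_1\colon(\Lambda,\Delta)\mapsto\Lambda$ and $\pi_2\colon(\Lambda,\Delta)\mapsto\Delta$. Under $\pi_1$ a path in $\calM$ projects to a $\calP$-walk in which colors $\{0,\dots,n-1\}$ are preserved and colors $\geq n$ become trivial; under $\pi_2$ it projects to a $\calQ$-walk in which color $n-1$ becomes color $0$, a color $c\geq n$ becomes color $c-n+1$, and colors $\leq n-2$ become trivial. Fix flags $X=(\Lambda_1,\Delta_1)$ and $Y=(\Lambda_2,\Delta_2)$, a top path $p$ using colors $\{i,\dots,m+n-2\}$, and a bottom path $q$ using colors $\{0,\dots,j\}$, with $i<j$. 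When $j\leq n-2$, the colors of $q$ fix the second coordinate, so $\Delta_1=\Delta_2$, while $\pi_1(p)$ and $\pi_1(q)$ give $\calP$-walks $\Lambda_1\to\Lambda_2$ using colors $\{i,\dots,n-1\}$ and $\{0,\dots,j\}$; the Path Intersection Property of $\calP$ then yields a $\calP$-walk using $\{i,\dots,j\}$, which lifts verbatim with the second coordinate frozen. The case $i\geq n$ is symmetric: now the colors of $p$ fix the first coordinate, and one applies the Path Intersection Property of $\calQ$.

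The remaining, and hardest, case is $i\leq n-1\leq j$, so that the mixing color $n-1$ lies in the target set $S=\{i,\dots,j\}$. Here I would not invoke the factors' Path Intersection Property at all. Instead I extract a $\calP$-walk $a=\pi_1(p)\colon\Lambda_1\to\Lambda_2$ using colors $\{i,\dots,n-1\}$ and a $\calQ$-walk $b=\pi_2(q)\colon\Delta_1\to\Delta_2$ using colors $\{0,\dots,j-n+1\}$, and splice them into a single $\calM$-walk confined to $S$. The obstacle is that each color-$(n-1)$ step acts simultaneously as a $\calP$-color-$(n-1)$ move on the first coordinate and a $\calQ$-color-$0$ move on the second; the coordinates move independently via colors $\{i,\dots,n-2\}$ and $\{n,\dots,j\}$ but are coupled at $n-1$. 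The key observation is that, since $\calP$ is facet-bipartite and $\calQ$ is vertex-bipartite, the number $s$ of color-$(n-1)$ steps in $a$ and the number $t$ of color-$0$ steps in $b$ each have a parity fixed by the endpoints, and since $X,Y\in\calM$ are same-colored pairs by \cref{flag-parity}(a), these parities agree, so $s\equiv t\pmod 2$. I can then pad whichever walk has fewer mixing steps by inserting back-and-forth pairs along a single edge of the relevant color until $s=t$, and interleave $a$ and $b$ so that their mixing steps occur simultaneously: between consecutive synchronized $n-1$-moves I run the intervening $\{i,\dots,n-2\}$-moves of $a$ (which fix the second coordinate) and $\{n,\dots,j\}$-moves of $b$ (which fix the first), the two blocks commuting. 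The resulting walk lies in $\calM$, uses exactly the colors of $S$, and joins $X$ to $Y$.

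I expect the synchronization of the shared color $n-1$ in this last case to be the main technical point; the parity-matching supplied by \cref{flag-parity}(a) together with the fact that a back-and-forth pair adds exactly two mixing steps is what makes it go through. Once this is in place, connectivity of $\calM$ is immediate from its definition as a single connected component, and the three verifications together show that $\calM$ is an $(m+n-1)$-polytope that is $(n-2,n)$-flat.
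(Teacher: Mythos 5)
Your proposal is correct, and while its skeleton (maniplex check, flatness via \cref{flag-parity}, Path Intersection Property via the two coordinate projections) matches the paper's, the mechanism in the crucial case $i \leq n-1 \leq j$ is genuinely different. The paper replays the $\calP$-projection of the top path inside $\calM$, arriving at $(\Lambda, \Psi)$ or $(\Lambda, \Psi^0)$ and correcting with at most one extra edge of color $n-1$; it then replays the $\calQ$-projection of the bottom path from there, and lets \cref{flag-parity}(a) force the endpoint to be $(\Lambda, \Delta)$ rather than $(\Lambda^{n-1}, \Delta)$ --- that is, the two projected walks are simply concatenated, and the parity argument absorbs the mismatch at the end. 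You instead synchronize the two projected walks: facet-/vertex-bipartiteness makes the counts of mixing steps well-defined modulo $2$, \cref{flag-parity}(a) makes the two parities agree, padding equalizes the counts, and interleaving makes the $(n-1)$-steps fire simultaneously. Both arguments hinge on exactly the same parity fact; the paper's version is shorter because it needs no padding or interleaving, while yours makes the coupling at color $n-1$ completely explicit. A point in your favor: your explicit side cases $j \leq n-2$ and $i \geq n$, resolved by invoking the Path Intersection Property of $\calP$ and of $\calQ$ respectively, are actually necessary --- the paper's closing identity $\{i, \ldots, n-1\} \cup \{n-1, \ldots, j\} = \{i, \ldots, j\}$ holds only when $i \leq n-1 \leq j$, and its constructed path can use color $n-1$ even when $j \leq n-2$, so these cases require the separate reduction you give (analogous to the proof of \cref{pk-polytope}). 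One small imprecision in your maniplex check: when one of the two commuting colors is $n-1$, it moves the other coordinate as well, so the moves do not literally ``affect only the first coordinate''; but since the companion color fixes that coordinate, it receives the same single step in either order, and your reduction to commutation in the factor still goes through.
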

	
	\begin{proof}
	It is straightforward to check that if $i$ and $j$ are in $\{0, \ldots, m \!+\! n \!-\! 2\}$ with $|i-j| > 1$, then $\calM[i,j]$ consists of $4$-cycles; this shows that
	$\calM$ is a maniplex. Flatness was proved in \cref{flag-parity}.
	To show that $\calM$ is a polytope, it suffices to show that it satisfies the Path Intersection Property. 
	Consider two arbitrary flags of $\calM$, say $(\Phi, \Psi)$ and $(\Lambda, \Delta)$. Suppose that there is a path from $(\Phi, \Psi)$
	to $(\Lambda, \Delta)$ that uses only colors in $\{0, \ldots, j\}$ and another path that uses only colors in $\{i, \ldots, m \!+\! n \!-\! 2\}$. We want
	to show that there must be a path that uses only the colors $\{i, \ldots, j\}$. 
	
	Since colors greater than $n \!-\! 1$ do not affect the first component, the path that uses colors in $\{i, \ldots, m \!+\! n \!-\! 2\}$ induces a path in $\calP$
	from $\Phi$ to $\Lambda$ that uses colors in $\{i, \ldots, n \shm 1\}$. Since colors less than $n \!-\! 1$ do not affect the second component,
	following the same sequence of colors in $\calM$ gives us a path from $(\Phi, \Psi)$ to either $(\Lambda, \Psi)$ or $(\Lambda, \Psi^0)$.
	In the latter case, we can follow one more edge of color $n \!-\! 1$ to arrive at $(\Lambda^{n \!-\! 1}, \Psi)$. Now, the path from $(\Phi, \Psi)$
	to $(\Lambda, \Delta)$ that uses colors in $\{0, \ldots, j\}$ induces a path from $\Psi$ to $\Delta$ that uses colors in
	$\{n \!-\! 1, \ldots, j\}$, and following this sequence of colors in $\calM$ gives us a path from wherever we stopped
	(either $(\Lambda, \Psi)$ or $(\Lambda^{n \!-\! 1}, \Psi)$) to either $(\Lambda, \Delta)$ or $(\Lambda^{n \!-\! 1}, \Delta)$. Since we supposed
	that $(\Lambda, \Delta)$ was a flag of $\calM$, \cref{flag-parity} implies that $(\Lambda^{n \!-\! 1}, \Delta)$ is not a flag of $\calM$, and so
	we must have arrived at $(\Lambda, \Delta)$. Thus, we have a path from $(\Phi, \Psi)$ to $(\Lambda, \Delta)$ that only uses
	colors in $\{i, \ldots, n \!-\! 1\} \cup \{n \!-\! 1, \ldots, j\} = \{i, \ldots, j\}$, as desired.
	\end{proof}

	\begin{example}
	If $\calQ$ is a $k$-gon with $k$ even, then $\calP | \calQ \cong \calP | k$. Essentially, each flag of the $k$-gon
	corresponds to a choice of one of the $k$ layers and one of the colors red or blue.
	\end{example}

	\begin{proposition} \label{iterated-2}
	Let $\calP$ be a facet-bipartite $n$-polytope and let $\calQ$ be a vertex-bipartite $m$-polytope.
	If $\calQ$ is facet-bipartite, then $\calP | \calQ$ is facet-bipartite.
	\end{proposition}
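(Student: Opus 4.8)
The plan is to exploit the fact that the facet-bipartiteness of an $N$-polytope is a statement about only its top color. Concretely, I would use the characterization (implicit in the definition of facet-bipartite given above) that an $N$-polytope is facet-bipartite if and only if one can $2$-color its flags so that every edge of color $N \shm 1$ joins flags of opposite colors while every edge of color $< N \shm 1$ joins flags of the same color; such a coloring is obtained by pulling back a bipartition of the facet graph along the map sending a flag to its facet. Since $\calM = \calP | \calQ$ is an $(m \shp n \shm 1)$-polytope, its top color is $m \shp n \shm 2$, so the goal becomes producing such a $2$-coloring of the flags of $\calM$ for the color $m \shp n \shm 2$.

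The key observation is that, reading off the second coordinate in the flat-amalgamation formula (the case $r = n \shm 1$), the top color $m \shp n \shm 2$ of $\calM$ acts on the $\calQ$-component exactly as the top color $m \shm 1$ of $\calQ$: indeed $(\Lambda,\Delta)^{m + n - 2} = (\Lambda, \Delta^{m-1})$. Moreover, no other color of $\calM$ reaches color $m \shm 1$ in the second coordinate: colors $0, \ldots, n \shm 2$ fix $\Delta$, color $n \shm 1$ sends $\Delta \mapsto \Delta^0$, and colors $n, \ldots, m \shp n \shm 3$ send $\Delta \mapsto \Delta^{i-n+1}$ with $i - n + 1 \in \{1, \ldots, m-2\}$.

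So, first I would invoke the hypothesis that $\calQ$ is facet-bipartite to fix a $2$-coloring of the flags of $\calQ$ in which exactly the edges of color $m \shm 1$ flip the color. Then I would color each flag $(\Lambda, \Delta)$ of $\calM$ by the color assigned to its second coordinate $\Delta$. The computation of the previous paragraph shows that, under this coloring, an edge of $\calM$ flips the color precisely when it has color $m \shp n \shm 2$ (since that is the only color sending $\Delta \mapsto \Delta^{m-1}$), and preserves it otherwise. By the characterization above, this exhibits $\calM$ as facet-bipartite.

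I do not expect a genuine obstacle here: once the correspondence between the top color of $\calM$ and the top color of $\calQ$ is recorded, the result is essentially a one-line pullback of colorings. The only point requiring a little care is the bookkeeping that no lower color of $\calM$ maps to color $m \shm 1$ of $\calQ$ --- that is, that $i - n + 1 \leq m - 2$ for all $i \leq m \shp n \shm 3$ and that color $n \shm 1$ maps to $0 \neq m \shm 1$ --- together with the degenerate case $m = 1$, where $m \shp n \shm 2 = n \shm 1$ already coincides with the color sending $\Delta \mapsto \Delta^0$ and the argument goes through verbatim. It is also worth remarking that the facet-bipartiteness of $\calM$ uses only that $\calQ$ is facet-bipartite; the facet-bipartiteness of $\calP$ and the vertex-bipartiteness of $\calQ$ enter only through \cref{flat-amalg-polytope}, to guarantee that $\calM$ is a polytope in the first place.
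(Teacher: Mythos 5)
Your proof is correct and is essentially the paper's own argument in dual form: the paper observes that a cycle in $\calP | \calQ$ with an odd number of edges labeled $m \shp n \shm 2$ would project to a cycle in $\calQ$ with an odd number of edges labeled $m \shm 1$, which is precisely the contrapositive of your pullback of the $2$-coloring along the second coordinate. Both proofs rest on the same key bookkeeping you make explicit --- that color $m \shp n \shm 2$ is the only color of $\calM$ acting as $\Delta \mapsto \Delta^{m \shm 1}$, while every other color fixes $\Delta$ or moves it by a color at most $m \shm 2$ --- so your write-up simply spells out details the paper's one-line proof leaves implicit.
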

	
	\begin{proof}
	If there is a cycle in $\calP | \calQ$ with an odd number of edges labeled $m \!+\! n \!-\! 2$, this induces a cycle
	in $\calQ$ with an odd number of edges labeled $m \!-\! 1$.
	\end{proof}
	
	\cref{iterated-2} implies that, if $\calQ_1, \ldots, \calQ_k$ are all vertex-bipartite and facet-bipartite,
	then we may construct a flat amalgamation $\calP | \calQ_1 | \cdots | \calQ_k$.
	
	Finally, let us determine the automorphism group of $\calP | \calQ$. Given an automorphism $\varphi$ of
	$\calP$ that sends $\Phi$ to $\Lambda$, let us say that $\varphi$ is \emph{$(n \!-\! 1)$-even} (respectively $(n \!-\! 1)$-odd)
	if the number of edges labeled $n \!-\! 1$ in any path from $\Phi$ to $\Lambda$ is even (respectively odd). 
	(As long as $\calP$ is facet-bipartite, this is well-defined.) We will similarly define automorphisms of $\calQ$ to be
	$0$-even or $0$-odd.

	\begin{theorem}
	Let $\calP$ be a facet-bipartite $n$-polytope with base flag $\Phi$ and let $\calQ$ be a vertex-bipartite $m$-polytope
	with base flag $\Psi$. Then
	\[ \G(\calP | \calQ) = \{ (\varphi, \psi) \in \G(\calP) \times \G(\calQ) : \textrm{ $\varphi$ is $(n \!-\! 1)$-even if and only if $\psi$ is $0$-even} \}. \]
	In particular, if all automorphisms of $\calP$ are $(n \!-\! 1)$-even and all automorphisms of $\calQ$ are $0$-even, then $\G(\calP | \calQ) = \G(\calP) \times \G(\calQ)$,
	and otherwise $\G(\calP|\calQ)$ is an index-$2$ subgroup of $\G(\calP) \times \G(\calQ)$.
	\end{theorem}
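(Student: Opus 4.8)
The plan is to prove that the map sending a compatible pair $(\varphi,\psi)$ to the permutation $\widetilde{(\varphi,\psi)}$ of $\calF(\calP) \times \calF(\calQ)$ defined by $(\Lambda,\Delta)\widetilde{(\varphi,\psi)} = (\Lambda\varphi, \Delta\psi)$ is a group isomorphism from the right-hand side onto $\G(\calP | \calQ)$, and then to read off the index statement. Here ``compatible'' means $\varphi$ is $(n-1)$-even if and only if $\psi$ is $0$-even.

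For the inclusion $\supseteq$, I would first record the key dictionary: because $\calP$ is facet-bipartite, an automorphism $\varphi \in \G(\calP)$ preserves the red/blue coloring of the flags when it is $(n-1)$-even and reverses it when it is $(n-1)$-odd, and likewise $\psi \in \G(\calQ)$ preserves or reverses colors according to whether it is $0$-even or $0$-odd. Granting compatibility, if $(\Lambda,\Delta)$ has both components the same color then so does $(\Lambda\varphi,\Delta\psi)$, so by \cref{flag-parity}(a) the image is again a flag of $\calP | \calQ$; thus $\widetilde{(\varphi,\psi)}$ is a well-defined bijection of $\calF(\calP|\calQ)$. That it preserves edge colors is a short case check against the three-line formula for $(\Lambda,\Delta)^i$: for $i < n-1$ and for $i \geq n$ only one component moves and the relevant factor is color-preserving, while for $i = n-1$ one uses $(\Lambda^{n-1})\varphi = (\Lambda\varphi)^{n-1}$ and $(\Delta^0)\psi = (\Delta\psi)^0$ together. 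Hence $\widetilde{(\varphi,\psi)} \in \G(\calP|\calQ)$, and $(\varphi,\psi) \mapsto \widetilde{(\varphi,\psi)}$ is visibly an injective homomorphism.

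The main work is the reverse inclusion, that every $\chi \in \G(\calP|\calQ)$ factors as some $\widetilde{(\varphi,\psi)}$; this is where the structural input is needed. Write $(\Phi,\Psi)\chi = (\Lambda_0,\Delta_0)$. By the proposition characterizing the color-components of $\calM = \calP|\calQ$, the component of $\calM[0,\ldots,n-1]$ through $(\Phi,\Psi)$ is carried by the first projection $\pi_1 \colon (\Lambda,\Delta) \mapsto \Lambda$ isomorphically onto $\calP$, and $\chi$ maps this component onto the component through $(\Lambda_0,\Delta_0)$, which $\pi_1$ also maps isomorphically onto $\calP$. Composing these three color-preserving isomorphisms of $n$-maniplexes produces $\varphi \in \G(\calP)$ with $\Phi\varphi = \Lambda_0$; running the symmetric argument with $\calM[n-1,\ldots,m+n-2]$, the second projection $\pi_2 \colon (\Lambda,\Delta)\mapsto\Delta$, and the co-$(n-2)$-face through $(\Phi,\Psi)$ produces $\psi \in \G(\calQ)$ with $\Psi\psi = \Delta_0$. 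Since $(\Lambda_0,\Delta_0)$ is a flag of $\calP|\calQ$, its two components share a color, which by the dictionary above forces $\varphi$ to be $(n-1)$-even exactly when $\psi$ is $0$-even, so $(\varphi,\psi)$ is compatible. Now $\chi$ and $\widetilde{(\varphi,\psi)}$ are two automorphisms of the connected graph $\calP|\calQ$ that agree on the single flag $(\Phi,\Psi)$, so they coincide, since a color-preserving automorphism of a connected edge-colored graph is determined by its image on one flag.

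Finally, the parity assignments $p\colon \G(\calP) \to \mathbb{Z}/2$ (recording $(n-1)$-even/odd) and $q\colon \G(\calQ) \to \mathbb{Z}/2$ (recording $0$-even/odd) are homomorphisms, and the compatible pairs are exactly the kernel of $(\varphi,\psi) \mapsto p(\varphi)+q(\psi)$. This kernel is all of $\G(\calP) \times \G(\calQ)$ precisely when both $p$ and $q$ are trivial, that is, when every automorphism of $\calP$ is $(n-1)$-even and every automorphism of $\calQ$ is $0$-even, giving the direct product; otherwise the map is onto $\mathbb{Z}/2$ and the kernel has index $2$. The step I expect to require the most care is the reverse inclusion, and specifically the verification that the projection and restriction maps used to build $\varphi$ and $\psi$ really are color-preserving isomorphisms; this rests entirely on the earlier proposition identifying the $n$-faces with $\calP$ and the co-$(n-2)$-faces with $\calQ$.
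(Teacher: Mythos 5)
Your proposal is correct and takes essentially the same approach as the paper: compatible pairs act componentwise (well-defined by \cref{flag-parity}(a), with the same edge-color verification as the paper's displayed computation), and every automorphism of $\calP|\calQ$ factors through the two projections onto $\G(\calP)$ and $\G(\calQ)$, a step the paper asserts with a ``clearly'' and you justify in full via the face components and determination of an automorphism by one flag. Your parity-homomorphism reading of the index-$2$ statement is just a more explicit version of what the paper says follows immediately.
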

	
	\begin{proof}
	Clearly, each automorphism of $\G(\calP | \calQ)$ induces an automorphism $\varphi$ of $\calP$ and
	an automorphism $\psi$ of $\calQ$, and so $\G(\calP | \calQ) \leq \G(\calP) \times \G(\calQ)$.
	Conversely, given automorphisms $\varphi$ and $\psi$, we may
	try to build an automorphism $(\varphi, \psi)$ of $\calP | \calQ$ that acts component-wise. Clearly, this
	will only work if $(\Phi \varphi, \Psi \psi)$ is in $\calP | \calQ$, and this is true if and only if the parity of
	the number of edges labeled $n \!-\! 1$ from $\Phi$ to $\Phi \varphi$ is the same as the parity of the number
	of edges labeled $0$ from $\Psi$ to $\Psi \psi$. If that is the case, then note that for
	each flag $(\Lambda, \Delta)$,
	\[ (\Lambda, \Delta)^i (\varphi, \psi) = (\Lambda^i, \Delta^{i-n \!+\! 1}) (\varphi, \psi) =(\Lambda^i \varphi, \Delta^{i-n \!+\! 1} \psi) =  ((\Lambda \varphi)^i, (\Delta \psi)^{i-n \!+\! 1}) = (\Lambda \varphi, \Delta \psi)^i, \]
	proving that $(\varphi, \psi)$ is an automorphism. That proves the first part and the second follows immediately.
	\end{proof}

	\begin{example}
	Suppose $\calP$ is the cuboctahedron and $\calQ$ is its dual, the rhombic dodecahedron. Then $\calP$ is facet-bipartite: we can color all of
	the square faces with one color and the triangles with another. Every automorphism of $\calP$ is $2$-even. Similarly, $\calQ$ is vertex-bipartite,
	and its automorphisms are all $0$-even. Thus $\G(\calP|\calQ) = \G(\calP) \times \G(\calQ)$, a group of order $48^2$.
	\end{example}

\section{Conclusions}	
\label{open-q}

	We have shown that every finite polytope $\calP$ has a flat extension, where we glue together an even number of
	copies of $\calP$ in a flat way. The strategy used does not work if we want to use an odd number of copies of
	$\calP$. In particular, if we use an odd number of copies, then we cannot match each flag $\Phi_i$ to some
	$\Phi_j$ --- some flags $\Phi_i$ must get matched to $\Psi_j$ with $\Phi \neq \Psi$. When is this possible
	and how can we do this in a consistent way? 
	
	\begin{problem}
	Describe a construction that takes an $n$-polytope $\calP$ and produces an $(n \!-\! 2,n)$-flat $(n \!+\! 1)$-polytope
	with $3$ facets all isomorphic to $\calP$. What restrictions on $\calP$ are there?
	\end{problem}
	
	Another interesting problem would be to further investigate the properties of the flat extensions that were
	described in \cref{other-exts}.
	
	\begin{problem}
	Determine the automorphism group of the flat extensions described in \cref{other-exts}.
	\end{problem}
	
\bibliographystyle{amsplain}
\bibliography{gabe}

\end{document}